\documentclass[a4paper, 11 pt]{amsart}

\usepackage{amsthm,amssymb,amsmath,amsfonts,mathrsfs,amscd}  
\usepackage[latin1]{inputenc}
\usepackage{enumitem}
\usepackage{verbatim,cite}
\usepackage{leftidx}
\usepackage[all,2cell]{xy}
\usepackage[margin=1in, a4paper]{geometry}
\usepackage{calligra} 
\usepackage{parskip}
\frenchspacing

\usepackage{url}

\DeclareMathAlphabet{\mathcalligra}{T1}{calligra}{m}{n} \DeclareFontShape{T1}{calligra}{m}{n}{<->s*[2.2]callig15}{} 


\numberwithin{equation}{subsection}
\newtheorem{theorem}[equation]{Theorem}

\newtheorem{lemma}[equation]{Lemma}
\newtheorem{proposition}[equation]{Proposition}

\theoremstyle{definition}

\theoremstyle{definition}
\newtheorem{definition}[equation]{Definition}
\newtheorem{remark}[equation]{Remark}

\newcommand{\fr}{\mathfrak}
\newcommand{\cl }{\mathcal}

\def \A {\mathbb{A}}
\def \Q{\mathbb{Q}}

\def \C {\mathbb{C}}
\def \T {\mathbb{T}}
\def \R {\mathbb{R}}

\def\Z{\mathbb Z}

\def \qpbar {\overline{\Q}_p}

\def\gp{\mathfrak{p}}

\def \cald {\mathcal{D}}
\def \cala {\mathcal{A}}

\def \calo {\mathcal{O}}

\def\scrx{\mathscr{X}}
\def\scra{\mathscr{A}}
\def\scrm{\mathscr{M}}
\def\scrw{\mathscr{W}}

\def\scrz{\mathscr{Z}}
\def\scrd{\mathscr{D}}

\def\Hom{\mathrm{Hom}}

\def\GL{\mathrm{GL}}

\def\Res{\mathrm{Res}}
\def\det{\mathrm{det}}
\def\dim{\mathrm{dim}}
\def\End{\mathrm{End}}

\def\WD{\mathrm{WD}}

\def\cl{\mathrm{cl}}

\setcounter{tocdepth}{1}

\def\Gal{\mathrm{Gal}}
\def\As{\mathrm{As}}
\def\Frob{\mathrm{Frob}}
\def\boldG{\mathbf{G}}

\title{$p$-adic Asai Transfer}
\author{Baskar Balasubramanyam}
\address{Department of Mathematics, Indian Institute of Science Education and Research Pune, Dr.\ Homi Bhabha Road, Pashan, Pune 411008.}
\email{baskar@iiserpune.ac.in}
\author{Dipramit Majumdar}
\address{Department of Mathematics, Indian Institute of Technology Madras, Chennai 600036}
\email{dipramit@iitm.ac.in}

\subjclass[2010]{11F41, 11F55, 11F33, 14G22}
\keywords{$p$-adic Langlands transfer, Asai transfer, eigenvarieties}

\date{\today}

\begin{document}
\maketitle

\begin{abstract}
Let $K/\Q$ be a real quadratic field. Given an automorphic representation $\pi$ for $\GL_{2}/K$, let $\As^\pm (\pi)$ denote the plus/minus Asai transfer of $\pi$ to an automorphic representation for $\GL_{4}/\Q$. In this paper, we construct a rigid analytic map from the universal eigenvariety of $\GL_{2}/K$ to the universal eigenvariety of $\GL_{4}/\Q$, which at nice classical points interpolate this Asai transfer.
\end{abstract}


\section{Introduction}

Let $K$ be a real quadratic field. Let $k \ge 2$ be an integer. Let $f$ be a Hilbert modular form over $K$ of weight $k$ (i.e., of parallel weight $(k,k)$) and level $1$. Further assume that $f$ is an eigenform for all the Hecke operators with eigenvalues $c(\fr a, f)$, where $\fr a$ runs over integral ideals of $\calo_K$ (the ring of integers in $K$). The standard $L$-function for $f$ is constructed from $c (\fr a, f)$ as a Dirichlet series over $K$.

In \cite{Asai}, Asai introduced the following $L$-function which is constructed only from the Hecke eigenvalues for ideals that come from $\Q$.  This is now referred to as the Asai $L$-function. More specifically, the Asai $L$-function is defined as
$$
G(s,f) = \zeta (2s - 2k + 2) \sum_{m=1}^\infty \frac{c(m \calo_K, f)}{m^s}.
$$
A priori, this function converges for $s$ in a certain right half-plane. It is known that this $L$-function has an Euler product expansion, analytic continuation to all of $\C$ and a functional equation. 

Let $\pi$ be the automorphic representation associated to the form $f$. Then the $L$-function $G(s, f)$ is (up to a shift) a certain automorphic $L$-function, denoted by $L(s, \pi, \As^+)$, associated to $\pi$. The principle of Langlands functoriality suggests, in this case, that this automorphic $L$-function is the standard $L$-function of an automorphic representation for $\GL_4/\Q$. The details of this Asai transfer are recalled in \S \ref{sec:asai-transfer}. Fix a prime $p$ that is unramified in $K$. The aim of this article is to construct a $p$-adic version of this Asai transfer.

Specifically, when $p$ splits in $K$, we construct a rigid analytic map from the eigenvariety attached to $\GL_2/K$ to the eigenvariety attached to $\GL_4/\Q$ that interpolate the classical Asai transfer on a dense subset of classical points. But when $p$ is inert, we are only able to construct a map to an eigenvariety which can be viewed as a quotient of the the eigenvariety attached to $\GL_4/\Q$. 

Historically, the study of Langlands' functoriality in families of automorphic forms can be traced back to the work of Hida \cite{H}, where a $\Lambda$-adic Jacquet--Langlands' transfer is constructed between families of Hilbert modular forms and of quaternionic automorphic forms. In the setting of eigenvarieties, Chenevier \cite{Che} constructed $p$-adic Jacquet--Langlands transfer,  which at classical points interpolate the classical Jacquet--Langlands' transfer for $\GL_{2}$. Following Chenevier's method, other instances of $p$-adic Langlands' functoriality have been established, see \cite{New13, Whi14, Lud14, Maj16, Lud17, Hans}.

This article is organized as follows. In \S \ref{sec:asai-transfer}, we recall some of the basic properties of the Asai transfer. In \S \ref{sec:universal-eigenvarieties}, we recall the construction due to Hansen of universal eigenvarieties attached to certain connected reductive groups. In \S \ref{sec:relevant-eigenvarieties}, we explicitly describe the eigenvarieties that are relevant to our construction of the $p$-adic Asai transfer map. In \S \ref{sec:p-adic-asai-transfer}, we finally construct the $p$-adic Asai transfer map between eigenvarieties using a comparison theorem due to Hansen. 

\subsection*{Notations}
Throughout this article, $p$ will denote a fixed odd integer prime, and $K$ a real quadratic extension in which $p$ is unramified. The ring of adeles over $\Q$ will be denoted by $\A = \A_\Q$. We will denote by $\A_{f}$ and $\A_{\infty}$ the finite adeles and the adeles at infinity, respectively. For a set of places $S$ of $\Q$, we will denote by $\A_{S}$ the adeles supported at $S$ and by $\A^S$ the adeles supported away from $S$. We will denote by $\A_K = \A \otimes_\Q K$ the adeles over $K$. We similarly define $\A_{K, f}, \A_{K, \infty}, \A_K^S$ and $\A_{K,S}$ when $S$ is a set of places of $K$.

\subsection*{Acknowledgements}
The first author would like to acknowledge the support of SERB grant EMR/2016/000840 in this project. The second named author would like to acknowledge the support of NFIG grant of IIT Madras numbered MAT/16-17/839/NFIG/DIPR.

\medskip

\section{Asai transfer}\label{sec:asai-transfer}

Let $K = \Q(\sqrt{d})$ be a real quadratic field. We now recall the Langlands functorial transfer from automorphic representations for $\GL_2$ over $K$ to automorphic representations of $\GL_4$ over $\Q$ that is called the Asai transfer. 

\smallskip

\subsection{Map between $L$-groups}

Let $\boldG_1$ denote the algebraic group $\GL_2$ over $K$ and let $G_1 = \mathrm{Res}_{K/\Q} \boldG_1$ denote the Weil restriction of $\boldG_1$ from $K$ to $\Q$.
 The Langlands dual group for $G_1$ is defined as 
$$
{^L}G_1 = (\GL_{2}(\C) \times \GL_{2}(\C)) \rtimes \mathrm{Gal}(K/\Q)
$$
where the nontrivial element $c \in \Gal (K/\Q)$ acts on the tuple of matrices via permutation. Let $G_2$ denote the algebraic group $\GL_4$ over $\Q$. The $L$-group of $G_2$ is given by ${^L} G_2 = \GL_{4}(\C).$ Let $\As^\pm$ denote the following representation of ${^L} G_1$ acting on $\C^2 \otimes \C^2$ given by 
\begin{gather*}
\As^\pm [(A,B)] (x \otimes y) = Ax \otimes By, \textrm{ for } A, B \in \GL_2 (\C) \\
\As^\pm [c] (x \otimes y) = \pm (y \otimes x).
\end{gather*}
We view these representations as maps between the $L$-groups $\As^\pm: {^L} G_1 \to {^L} G_2$. The main results of \cite{Kris}  and \cite{Ram} show that the Asai transfer is automorphic. 

\smallskip
\subsection{The Asai motive}

We now recall some basic facts about Hilbert modular forms and the associated Asai motive. Let $\sigma_1, \sigma_2 : K \to \R$ denote the two embeddings of $K$ into $\R$. The weights of Hilbert modular forms are elements of the lattice $\Z[\sigma]:= \Z \sigma_1 + \Z \sigma_2$. Specifically, the weight of a Hilbert modular form will correspond to a pair $(n,v)$ with $n, v \in \Z[\sigma]$ such that $n+2v$ is parallel. By a parallel weight, we mean that $n+2v = m t$ where $t = \sigma_1 + \sigma_2$ and $m \in \Z$. Writing $n = n_1 \sigma_1 + n_2 \sigma_2$, this condition implies that the parity of $n_1$ and $n_2$ are the same. The parity assumption is necessary for the existence of Hilbert modular forms of a particular weight. Let $k = n + 2t$ and assume further that $k_i \ge 2$. 

Let $\mathfrak n$ be an integral ideal in $K$. Let $f$ be a Hilbert cusp form of weight $(n,v)$ and level $\mathfrak n$. Suppose that $f$ is a primitive eigenform, then the motive $M$ attached to $f$ is a pure simple rank $2$ motive defined over $K$. The Hodge types of the motive at $\sigma_i$ are $\{ (n_i + 1 + v_i, v_i), (v_i, n_i + 1 + v_i) \}$. If $\mathfrak a$ is an integral ideal in $K$, let  $c(\mathfrak a, f)$ denote the Hecke eigenvalue for the Hecke operator $T(\mathfrak a)$. Let ${^c} f$ be the Hilbert modular form whose Hecke eigenvalues are given by $c(\mathfrak a, {^c} f) = c(\mathfrak a^c, f)$. Let ${^c} M$ denote the motive associated to ${^c} f$. The Hodge type of this conjugate motive at $\sigma_i$ will be the Hodge type at $\sigma_i \circ c$. 

The motives $\As^\pm (M)$ associated to the Asai transfer will be pure simple rank $4$ motives defined over $\Q$. The Hodge types at the infinite place of $\Q$ are
\begin{gather*}
(n_1 + n_2 + v_1 + v_2 + 2, v_1 + v_2), \\
(n_1 + 1 + v_1 + v_2 , n_2 + 1 + v_1 + v_2), \\
(n_2 + 1 + v_1 + v_2 , n_1 + 1 + v_1 + v_2), \\
(v_1 + v_2, n_1 + n_2 + v_1 + v_2 + 2). 
\end{gather*}
Note that the weight of this motive is $n_1 + n_2 + 2(v_1 + v_2) + 2 = 2 m + 2$. Note also that when $n_1 = n_2$, the Asai motive has a middle (i.e., $(p,p)$) Hodge type and is hence not cohomological. Henceforth, we assume that $n_1 > n_2$.

\smallskip
\subsection{Weight of the Asai transfer} 

Let $G_1$ and $G_2$ be as above. A weight for $G_1$ is a tuple $\lambda = (\lambda_1, \lambda_2)$ where $\lambda_i \in \Z^2$. We say that the weight $\lambda$ is dominant if $\lambda_i = (a_i, b_i)$ with $a_i \ge  b_i$. We can relate these weights to the ones discussed above by taking $a_i = n_i + v_i$ and $b_i = v_i$.

Let $\pi$ be an automorphic representation for $G_1$. Let $\lambda$ be a dominant weight for $G_1$. We say that $\pi$ is cohomological of weight $\lambda$ if 
$$
H^* (\fr g_{1,\infty}, K_{1,\infty}; \pi_\infty \otimes \mathscr L_\lambda) \neq 0,
$$
where $\mathfrak g_{1,\infty}$ is the Lie algebra of $G_{1,\infty} = G_1 (\R)$ and $K_{1,\infty}$ is the maximal compact modulo the centre in $G_{1,\infty}$, and  $\mathscr L_\lambda$ is the highest weight representation associated to $\lambda$. In order for $\lambda$ to support cohomological automorphic representations, we require that $\lambda$ be pure, i.e., $a_1 + b_1 = a_2 + b_2$. 

A weight for $G_2$ is a tuple $\mu \in \Z^4$. We say that $\mu =  (\mu_1, \mu_2, \mu_3, \mu_4)$ is dominant if $\mu_1 \ge \mu_2 \ge \mu_3 \ge \mu_4$.  If $\Pi$ is an automorphic representation for $G_2$ and $\mu$ a dominant weight for $G_2$, we define the notion of $\Pi$ being cohomological of weight $\mu$ in a similar fashion. Similarly, for $\mu$ to support cohomological automorphic representations, we require that $\mu$ is pure, i.e., $\mu_1 + \mu_4 = \mu_2 + \mu_3$.

Let $\pi$ be the automorphic representation over $G_1 (\A)$ attached to the primitive eigenform $f$ of weight $(n,v)$. Then $\pi_\infty = \pi_1 \otimes \pi_2$, where $\pi_\infty$ is the representation at infinity and $\pi_i$ are  discrete series representations up to twists by powers of the determinant. The Langlands parameter of $\pi_i$ is given by 
$$
\tau (\pi_i ) = z^{\frac{1}{2} - v_i} \overline{z}^{- n_i - v_i - \frac{1}{2} } + z^{- n_i - v_i - \frac{1}{2} }  \overline{z}^{\frac{1}{2} - v_i}.
$$
One calculates that the Langlands parameter of the Asai transfer is
\begin{multline*}
\tau (\As (\pi)) =   z^{1 - v_1 -  v_2}  \overline{z}^{ - n_1 - n_2 - v_1 - v_2 - 1} + z^{ -n_1 - n_2 - v_1 - v_2 - 1} \overline{z}^{1 - v_1 - v_2} \\
 +  z^{ - n_2 - v_1 - v_2 } \overline{z}^{ -n_1 - v_1 - v_2} + z^{ - n_1 - v_1 - v_2 } \overline{z}^{- n_2 - v_1 - v_2}.
\end{multline*}
As the exponents in the Langlands parameter are not half-integers, $\As(\pi)$ is not cohomological. However, if we normalize the Asai transfer to be $\As(\pi) \otimes |\det|^{1/2}$, then the Langlands parameter becomes
\begin{multline*}
  z^{\frac{3}{2} - v_1 -  v_2}  \overline{z}^{ - n_1 - n_2 - v_1 - v_2 - \frac{1}{2}} + z^{ -n_1 - n_2 - v_1 - v_2 - \frac{1}{2}} \overline{z}^{\frac{3}{2} - v_1 - v_2} \\
 +  z^{\frac{1}{2} - n_2 - v_1 - v_2 } \overline{z}^{\frac{1}{2} -n_1 - v_1 - v_2} + z^{\frac{1}{2} - n_1 - v_1 - v_2 } \overline{z}^{\frac{1}{2} - n_2 - v_1 - v_2}.
\end{multline*}
One can verify that this representation is supported in cohomology of weight $\mu = (\mu_1, \mu_2, \mu_3, \mu_4) \in \Z^4$ which we now describe. The weight $\mu$ is pure (in the sense of Clozel) with purity weight $w = 2m-1$ , i.e., $\mu_1 + \mu_4 = w = \mu_2 + \mu_3$ (here $m=n_{1}+2v_{1}$). We also have 
\begin{gather*}
\mu_1 = \frac{n_1 + n_2}{2} + m -1 \\
\mu_2 = \frac{n_1 - n_2}{2} + m-1.
\end{gather*}
We easily calculate 
\begin{gather*}
\mu_3 = m - \frac{n_1 - n_2}{2} \\
\mu_4 =m - \frac{n_1 + n_2}{2} 
\end{gather*}
from the purity condition.

\smallskip
\subsection{Local Asai transfer}
In this section, we describe the local Asai transfer at almost all finite places in terms of Langlands' parameters. Let $k$ be any local field, let $W_k$ denote the Weil group for $k$.  Let $\ell$ be an integer prime that is unramified in $K$. We have two cases depending on whether $\ell$ splits or is inert in $K$.

First we assume that $\ell$ is split in $K$. Say, $\ell = \mathfrak l \mathfrak l^c$. Since $G_1 = \Res_{K/\Q} (\boldG_1)$, we have $(G_1)_{\ell} = G_1/\Q_\ell = (\boldG_1)_{\mathfrak l} \times (\boldG_1)_{ \mathfrak l^c}$. 
We know that ${^L} (\boldG_1)_{\mathfrak l} = {^L} (\boldG_1)_{\mathfrak l^c} = \GL_2 (\C)$ and  $^{L}(G_1)_\ell =  {^L} (\boldG_1)_{\mathfrak l} \times {^L} (\boldG_1)_{\mathfrak l^c}$. By an $L$-parameter for  $(\boldG_1)_{\mathfrak l}$, we mean a continuous morphism
$$
\varphi: W_{K_{\mathfrak l}} \to {^L} (\boldG_1)_{\mathfrak l}
$$
such that $\varphi (x) $ is semi-simple for all $x \in W_{K_{\mathfrak l}}$. Similarly take an $L$-parameter for $(\boldG_1)_{\mathfrak l^c}$ denoted by
$$
\varphi^c: W_{K_{\mathfrak l^c}} \to {^L} (\boldG_1)_{\mathfrak l^c}.
$$
Identifying $W_{K_{\mathfrak l}}$ with $W_{\Q_\ell}$ and using $\varphi$ and $\varphi^c$, we now construct an $L$-parameter for $(G_1)_\ell$ as
$$
\tilde \varphi = \varphi \times \varphi^c: W_{\Q_\ell}  \to {^L} (G_1)_\ell.
$$
Now let $\Frob_{\mathfrak l}, \Frob_{\mathfrak l^c}$ and $\Frob_{\ell}$ denote the Frobenius elements associated to $\mathfrak l, \mathfrak l^c$ and $\ell$ respectively. 

Suppose that the 
$$
\varphi(\Frob_{\mathfrak{l}}) = \bmatrix \alpha_{\fr l} & \\ & \beta_{\fr l} \endbmatrix \quad \mathrm{and} \quad \varphi^c(\Frob_{\mathfrak{l^c}}) = \bmatrix \alpha_{\fr l^c} & \\ & \beta_{\fr l^c}\endbmatrix,
$$ 
then we see that
$$
\tilde \varphi (\Frob_\ell) = \left(\bmatrix \alpha_{\fr l} & \\ & \beta_{\fr l} \endbmatrix, \bmatrix \alpha_{\fr l^c} & \\ & \beta_{\fr l^c} \endbmatrix \right).
$$
Applying the Asai transfer $\As^\pm$, we see that
$$
\As^\pm \tilde \varphi (\Frob_\ell) = \bmatrix \alpha_{\fr l} \alpha_{\fr l^c} & \\ & \alpha_{\fr l} \beta_{\fr l^c} \\ & & \beta_{\fr l} \alpha_{\fr l^c} \\ & & & \beta_{\fr l} \beta_{\fr l^c} \endbmatrix.
$$
The calculation above shows that the local Asai transfer for $f$ at a split place $\ell=\fr l \fr l^c$ is same as local Rankin--Selberg transfer for $f \times {^c}f$ at $\ell$.

Now we assume that $\ell$ is inert in $K$. Let $K_\ell$ denote the completion of $K$ at $\ell$. This is an unramified quadratic extension of $\Q_\ell$. In this case, ${^L} (G_1)_\ell = ({^L} (\boldG_1)_\ell \times {^L} (\boldG_1)_\ell ) \rtimes \Gal (K_\ell/\Q_\ell)$ where ${^L} (\boldG_1)_\ell = \GL_2 (\C)$. We also know that $W_{K_\ell}$ is an index $2$ subgroup of $W_{\Q_\ell}$. Let 
$$
\varphi: W_{K_\ell} \to {^L} (\boldG_1)_\ell
$$
be an $L$-parameter of $(\boldG_1)_\ell$. There is an extension of $\varphi$ to an $L$-parameter of $(G_1)_\ell$ constructed as follows. Pick any $j \in W_{\Q_\ell} \setminus W_{K_\ell}$. Define a map
$$
\tilde \varphi : W_{\Q_\ell} \to {^L} G_\ell
$$
by sending $x \in W_{K_\ell}$ to $(\varphi (x), \varphi (jx j^{-1})) \times 1$ and by sending $j$ to $(\mathrm{Id}, \varphi (j^2)) \times c$. Composing $\tilde \varphi$ with $\As^\pm$, we get the $L$-parameter for the Asai transfer.

Now take $j = \Frob_\ell \in W_{\Q_\ell} \setminus W_{K_\ell}$ and we take $\Frob^2_\ell \in W_{K_\ell}$ to be the Frobenius element for the ideal $\ell \calo_{K_\ell}$ over $K_\ell$. Suppose that 
$$
\varphi (\Frob_\ell^2) = \bmatrix \alpha_\ell & \\ & \beta_\ell \endbmatrix,
$$
then we see that 
$$
\tilde \varphi (\Frob_\ell) = \left( \mathrm{Id}, \bmatrix \alpha_\ell & \\ & \beta_\ell \endbmatrix \right) \times c.
$$
Applying the Asai transfer map, we get
$$
\As^\pm \tilde \varphi (\Frob_\ell) = \bmatrix \pm \alpha_\ell & \\ & \pm \beta_\ell \\ & & & \pm \alpha_\ell \\ & & \pm \beta_\ell & \endbmatrix.
$$
This last matrix is equivalent to
$$
\bmatrix \pm \alpha_\ell & \\ & \pm \beta_\ell \\ & & \pm \sqrt{\alpha_\ell \beta_\ell} \\ & & & \mp \sqrt{\alpha_\ell \beta_\ell} \endbmatrix.
$$

An $L$-parameter is called unramified if it factors through $\Frob^\Z$. Thus unramified parameters are completely determined by its value on the Frobenius element. Unramified $L$-parameters are in bijection with unramified automorphic representations \cite[Proposition 1.12.1]{BR}. Hence this calculation completely determines the Asai transfer of unramified representations at unramified places.

\begin{remark}
Given $\pi = \otimes_v \pi_v$, the local Langlands correspondence gives the local Asai transfer $\As (\pi_v)$. If $\As (\pi) = \otimes_v \As(\pi_v)$, the global Asai transfer (i.e., the automorphy of $\As (\pi)$) is proved using the converse theorem. The analytic properties for certain $L$-functions needed to apply the converse theorem are proved using the Rankin--Selberg method (in \cite{Ram}) or the Langlands--Shahidi method (in \cite{Kris}).
\end{remark}

\medskip

\section{Brief overview of eigenvarieties due to Hansen}\label{sec:universal-eigenvarieties}

In this section, we first recall the notion of eigenvariety datum and the construction of an eigenvariety from such a datum. We then recall the construction of universal eigenvariety due to Hansen for certain reductive groups $G$. Finally, we recall the comparison theorem that allows us to construct rigid analytic maps between eigenvarieties. Our main reference for this section will be \cite{Hans}, and we adopt much of its notation.

\smallskip

\subsection{Eigenvariety data}
Let $p$ be an odd prime. An eigenvariety datum is defined as a tuple $\scrd = (\scrw, \scrz, \scrm, \T, \psi)$. We describe below each of the terms appearing in this definition.

The space $\scrw$ is a separated, reduced, relatively factorial rigid analytic space and is called the weight space. In our context, the space $\scrw$ will parametrize homomorphisms from the maximal torus of a reductive group $G$. The weight space contains as a dense subset a set of classical weights that support classical automorphic forms on $G$. 

Let $\mathbf{A}^1$ denote the rigid analytic affine line. The spectral variety $\scrz \subset \scrw \times \mathbf{A}^1$ is a Fredholm hypersurface, i.e., a closed immersion that is cut out by a Fredholm series. See \cite[Definition 4.1.1]{Hans} for the precise definition. Projection on the first coordinate induces a map $w : \scrz \to \scrw$ called the weight map. 

The ``overconvergent automorphic forms'' $\scrm$ will be a coherent  sheaf on $\scrz$. The sheaf is usually constructed from a suitable graded module $M^*$ of $p$-adic automorphic forms  or a complex whose cohomology yields $M^*$. The Hecke algebra $\T$ will be a commutative $\Q_p$-algebra equipped with an action $\psi: \T \to \End_{\calo_\scrz} (\scrm)$. The variety $\scrz$, in fact, will parametrize eigenvalues of an operator $U \in \T$ acting on $M^*$ or the complex whose cohomology is $M^*$. 

The following theorem gives us the eigenvariety associated to an eigenvariety datum. This follows from Buzzard's eigenvariety machine \cite{Buz}.

\begin{theorem}{\cite[Theorem 4.2.2]{Hans}}
Given an eigenvariety datum $\scrd$, there exists a separated rigid analytic space $\scrx$ together with a finite morphism $\pi: \scrx \to \scrz$, a morphism $w: \scrx \to \scrw$, an algebra homomorphism $\phi_\scrx : \T \to \calo (\scrx)$, and a coherent sheaf $\scrm^\dagger$ on $\scrx$ together with a canonical isomorphism $\scrm \cong \pi_* \scrm^\dagger$ compatible with the actions of $\T$ on $\scrm$ and $\scrm^\dagger$ (via $\psi$ and $\phi_\scrx$, respectively). The points of $\scrx$ lying over $z \in \scrz$ are in bijection with the generalized eigenspaces for the action of $\T$ on $\scrm(z)$ (the stalk at $z$). 
\end{theorem}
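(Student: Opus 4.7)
The plan is to apply Buzzard's eigenvariety machine, which produces $\scrx$ by a local-to-global construction over the spectral variety $\scrz$. The key input from the datum is that $\scrz$ is a Fredholm hypersurface in $\scrw \times \mathbf{A}^1$, so it admits an admissible cover by affinoid opens $U \subset \scrz$ that are finite over affinoid opens of $\scrw$ (this is the Coleman--Mazur--Buzzard structure of Fredholm hypersurfaces, where the slope decomposition of the Fredholm series provides the cover). On each such $U$, the coherent sheaf $\scrm$ gives a finite $\calo(U)$-module $M_U := \scrm(U)$, and the Hecke action $\psi$ makes $M_U$ a module over the (non-commutative, a priori large) algebra $\T$.

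Locally, I would define $\T_U \subset \End_{\calo(U)}(M_U)$ to be the $\calo(U)$-subalgebra generated by the image of $\T$ under $\psi$. Since $M_U$ is finitely generated over the Noetherian ring $\calo(U)$, its endomorphism ring is finite over $\calo(U)$, hence $\T_U$ is a commutative (because $\T$ is commutative) finite $\calo(U)$-algebra. Setting $\scrx_U := \mathrm{Sp}(\T_U)$ yields an affinoid equipped with a finite map $\pi_U : \scrx_U \to U$, a tautological homomorphism $\T \to \T_U = \calo(\scrx_U)$, and the structure of a finite $\T_U$-module on $M_U$, which we interpret as a coherent sheaf $\scrm^\dagger_U$ on $\scrx_U$ satisfying $(\pi_U)_* \scrm^\dagger_U \cong \scrm|_U$.

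Next I would verify compatibility on overlaps: for $V \subset U$ an admissible affinoid subdomain, flatness of $\calo(V)$ over $\calo(U)$ on the relevant base change and the universal property of the image algebra give a canonical identification $\T_V \cong \T_U \widehat{\otimes}_{\calo(U)} \calo(V)$, so the local pieces $\scrx_U$ glue along $\pi^{-1}(U \cap U')$ to a rigid space $\scrx$, equipped with the finite map $\pi : \scrx \to \scrz$, the composite weight map $w = \mathrm{pr}_1 \circ \pi$, the algebra morphism $\phi_\scrx : \T \to \calo(\scrx)$ assembled from the local homomorphisms, and the coherent sheaf $\scrm^\dagger$. Separatedness of $\scrx$ follows from separatedness of $\scrz$ and finiteness of $\pi$. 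The isomorphism $\scrm \cong \pi_* \scrm^\dagger$ holds by construction on each $U$ and the Hecke-equivariance is built into the definition.

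The final assertion about points over $z$ is a straightforward consequence of commutative algebra once the construction is in place: by definition, $\pi^{-1}(z) = \mathrm{Sp}(\T_U \otimes_{\calo(U)} k(z))$ for any affinoid $U \ni z$, and this base-changed algebra is a finite-dimensional commutative $k(z)$-algebra that acts faithfully on $\scrm(z)$, so it decomposes as a finite product of local Artinian $k(z)$-algebras indexed by the maximal ideals, which are in bijection with the systems of generalized eigenvalues for $\T$ on $\scrm(z)$. The main conceptual obstacle is the gluing step: one must check that the image algebras $\T_U$ patch correctly across admissible intersections, which relies crucially on the fact that the affinoid cover of $\scrz$ chosen from its Fredholm structure is stable under taking rational subdomains and that base change of images of Hecke algebras is well-behaved in this setting; this is the technical heart of Buzzard's machine and is exactly what justifies invoking \cite{Buz} rather than reproving it.
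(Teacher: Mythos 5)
Your proposal follows essentially the route that the paper itself defers to: the paper gives no argument beyond citing Hansen's Theorem 4.2.2 and Buzzard's machine, and what you write is precisely that construction in Hansen's coherent-sheaf formulation --- locally take the $\calo(U)$-subalgebra $\T_U\subset\End_{\calo(U)}(\scrm(U))$ generated by $\psi(\T)$, set $\scrx_U=\mathrm{Sp}(\T_U)$, and glue; your observation that flatness of $\calo(U)\to\calo(V)$ for affinoid subdomains gives $\T_V\cong\T_U\otimes_{\calo(U)}\calo(V)$ (injectivity into $\End$ is preserved by flat base change, surjectivity onto the image algebra is clear) is exactly the correct replacement for Buzzard's ``links'' in this setting. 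One cosmetic point: slope-adaptedness and finiteness of the cover over the weight space belong to the construction of the sheaf $\scrm$ itself, not to this abstract statement, for which any admissible affinoid cover of $\scrz$ together with coherence of $\scrm$ suffices.

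The one step that is wrong as stated is the claim that the fiber algebra $A=\T_U\otimes_{\calo(U)}k(z)$ acts \emph{faithfully} on $\scrm(z)$: tensoring the inclusion $\T_U\hookrightarrow\End_{\calo(U)}(M_U)$ with $k(z)$ is not injective in general. For instance, with $\calo(U)=k[t]$, $M_U=\calo(U)^2$ and a single Hecke operator acting by $\smallmat{0}{t}{0}{0}$, one has $\T_U\cong k[t][x]/(x^2)$ inside $\End(M_U)$, but at $t=0$ the class of $x$ kills the fiber. The bijection you want nevertheless holds, for a slightly different reason: $M_U$ is a finite and faithful $\T_U$-module, so $\mathrm{Supp}_{\T_U}(M_U)=\mathrm{Spec}\,\T_U$, and Nakayama applied at each maximal ideal $\fr m$ of $A$ (localize $M_U$ at the corresponding prime of $\T_U$, which is finite over the local ring there, and reduce modulo the maximal ideal of $\calo(U)$ at $z$) shows $\scrm(z)_{\fr m}\neq 0$; hence every point of $\pi^{-1}(z)$ carries a nonzero generalized eigenspace, and conversely each generalized eigenspace singles out a maximal ideal of $A$. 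Replacing the faithfulness assertion by this support/Nakayama argument closes the only genuine gap in your sketch.
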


\smallskip
\subsection{Comparison theorem}

In this section we state the comparison theorem due to Hansen (which is a generalization of a comparison theorem of Chenevier \cite[Proposition 4.5]{Che}) enabling us to construct rigid analytic maps between eigenvarieties.

Let $\scrx$ denote the eigenvariety associated to the datum $\scrd = (\scrw, \scrz, \scrm, \T, \psi)$ as above. Let $\scrx^{\mathrm{red}}$ denote the nilreduction of $\scrx$. We denote by $\scrx^{\circ}$ (called the core  of $\scrx$), the union of $\dim\ \scrw$-dimensional irreducible components of $\scrx^{\mathrm{red}}$, viewed as a closed subspace of $\scrx$. If $\scrx^{\circ} \cong \scrx$, we say that the eigenvariety $\scrx$ is unmixed. The classical eigencurve and the Hilbert modular eigenvariety (associated to $\GL_{2}$ over a totally real field) are examples of eigenvarieties that are unmixed. On the other hand, the eigenvarieties constructed for $\GL_n$ for $n>2$ are not unmixed.

Inside the spectral variety $\scrz$, we define a subspace
$$
\scrz^{\circ} = \{ z \in \scrz \mid  \pi^{-1}(z) \in \scrx^\circ \}.
$$
In fact, $\scrz^{\circ}$ is naturally a union of irreducible components of $\scrz^{\mathrm{red}}$.

\begin{theorem}\label{compthm}\cite[Theorem 5.1.6]{Hans}
Suppose we have two eigenvariety datum
$$
\scrd_{i}= (\scrw_{i}, \scrz_{i}, \mathscr{M}_{i}, \T_{i}, \psi_{i}) \text{   for } i=1,2.
$$
Moreover, assume that we are given
\begin{itemize}
\item[(i)] a closed immersion between the weight spaces, $j: \scrw_{1} \hookrightarrow \scrw_{2}$;
\item[(ii)] a homomorphism between the Hecke algebras, $\sigma: \T_{2} \to \T_{1}$;
\item[(iii)] a Zariski accumulation dense set $\scrz_1^{\cl} \subset \scrz_{1}^{\circ}$ and an extension $j(\scrz_1^{\cl}) \subset \scrz_{2}$ such that for all $z \in \scrz_{1}^{\cl}$, there exists a inclusion $\mathscr{M}_{1}(z)^{ss} \hookrightarrow \mathscr{M}_{2}(j(z))^{ss}$ as $\T_{2}$-module. Here the notation ${^{ss}}$ denotes the semisimplification of a module.
\end{itemize}
Then, there exists a rigid analytic map between the eigenvarieties $i:\scrx_{1}^{\circ} \to \scrx_{2}$ such that the following two diagrams
\begin{center}
\begin{tabular}{ccc}
\begin{minipage}[t]{0.2\textwidth}
$$\xymatrix{
\scrx_{1}^{\circ} \ar[r]^{i}\ar[d]_{w_{1}} & \scrx_{2}\ar[d]^{w_{2}}   \\
\scrw_{1}\ar@{^{(}->}[r]_{j} & \scrw_{2} 
}$$
\end{minipage} &
\begin{minipage}[t]{0.1\textwidth}
\vspace{.25in}
$$\mathrm{and}$$
\end{minipage} &
\begin{minipage}[t]{0.2\textwidth}
$$\xymatrix{
 \calo(\scrx_{2}) \ar[r]^{i^{\ast}}& \calo(\scrx_{1}^{\circ}) \\
\T_{2}\ar[r]_{\sigma}\ar[u]^{\phi_{2}} & \T_{1} \ar[u]_{\phi_{1}}
}$$
\end{minipage}
\end{tabular}
\end{center}
commute.
\end{theorem}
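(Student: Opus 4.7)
The plan is to apply the universal property of $\scrx_2$ (from Buzzard's eigenvariety machine) to the core $\scrx_1^{\circ}$, after assembling from the given data a candidate weight map and a candidate $\T_2$-character on $\scrx_1^{\circ}$. Concretely, I would take the composition $j \circ w_1 : \scrx_1^{\circ} \to \scrw_2$ as the weight coordinate and $\phi_1 \circ \sigma : \T_2 \to \calo(\scrx_1^{\circ})$ as the Hecke character. By construction, $\scrx_1^{\circ}$ is reduced and equidimensional of dimension $\dim \scrw_1 = \dim j(\scrw_1)$, which is the correct dimension for an interpolation argument into $\scrx_2$.

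Next, I would verify, at every point $x \in \scrx_1^{\circ}$ lying above some $z \in \scrz_1^{\cl}$, that the character $(\phi_1 \circ \sigma)(x)$ occurs as the Hecke eigensystem of some point of $\scrx_2$ above $j(z)$. Recall that $\pi_1^{-1}(z)$ is in bijection with the generalized $\T_1$-eigensystems on $\scrm_1(z)$, and similarly for $\scrx_2$ over $j(z)$. The inclusion $\scrm_1(z)^{ss} \hookrightarrow \scrm_2(j(z))^{ss}$ of $\T_2$-modules supplied by (iii) (with $\T_2$ acting on the left side via $\sigma$) implies that each generalized $\T_1$-eigensystem in $\scrm_1(z)$, pulled back along $\sigma$, decomposes into generalized $\T_2$-eigensystems occurring in $\scrm_2(j(z))$. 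Since $\scrz_1^{\cl}$ is Zariski accumulation dense in $\scrz_1^{\circ}$ and $\pi_1$ is finite, the preimage $\pi_1^{-1}(\scrz_1^{\cl})$ is Zariski accumulation dense in $\scrx_1^{\circ}$.

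To promote this pointwise data to a morphism of rigid spaces, I would invoke the interpolation property of eigenvarieties built by Buzzard's machine, in the form going back to Chenevier \cite{Che}: any reduced, equidimensional rigid space $Y$ of dimension $\dim \scrw_2$, equipped with a map $Y \to \scrw_2$ and a homomorphism $\T_2 \to \calo(Y)$ whose induced pointwise Hecke characters on a Zariski accumulation dense subset all factor through $\scrx_2$ over $\scrw_2$, admits a unique morphism $Y \to \scrx_2$ over $\scrw_2$ compatible with the Hecke data. Applied to $Y = \scrx_1^{\circ}$ with the data assembled above, this yields the map $i: \scrx_1^{\circ} \to \scrx_2$, and both commutative diagrams then follow by construction: the weight square because we chose the weight map to be $j \circ w_1$, and the Hecke square because we chose the character to be $\phi_1 \circ \sigma$.

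The principal obstacle is not the pointwise verification, which is essentially formal once (iii) is in place, but the globalization step: a pointwise factorization through $\scrx_2$ at a Zariski dense collection of points of a rigid space need not in general extend to a morphism of rigid spaces. This is exactly the reason one must restrict to the core $\scrx_1^{\circ}$ rather than all of $\scrx_1$, and one must exploit the Fredholm-hypersurface structure of $\scrz_2$ together with the finiteness of $\scrx_2 \to \scrz_2$ to build local factorizations over admissible affinoid opens of $\scrw_2$ and then glue them using the accumulation property. This globalization is the technical heart of the statement, and is precisely what is carried out in \cite[Theorem 5.1.6]{Hans}.
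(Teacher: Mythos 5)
First, note that the paper contains no proof of this theorem: it is imported verbatim from \cite[Theorem 5.1.6]{Hans}, so there is no internal argument to compare with, and your sketch has to be judged on its own terms and against Hansen's actual proof. In outline you have the right strategy, and it is the same one Hansen (following Chenevier \cite{Che}) uses: take $j\circ w_1$ as the weight datum and $\phi_1\circ\sigma$ as the $\T_2$-character, use the stalk inclusion $\scrm_1(z)^{ss}\hookrightarrow\scrm_2(j(z))^{ss}$ to see that for every point of $\scrx_1^{\circ}$ above $z\in\scrz_1^{\cl}$ the pulled-back eigensystem occurs in $\scrm_2(j(z))$ (one small slip: the $\sigma$-pullback of a $\T_1$-eigensystem is a single $\T_2$-character, it does not ``decompose''), and then globalize using density of such points on the core together with reducedness.

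The genuine gap is that the globalization is not proved but assumed: the ``interpolation property'' you invoke in your third paragraph is, as stated, essentially the theorem itself (and, stated with only Zariski density and no control of the spectral data, it is not a correct black box), and your final paragraph then concedes the step by citing the very result under discussion, so the argument is circular exactly at its technical heart. Concretely, what is missing is the construction of a morphism to the spectral variety: one must show that $j\circ w_1$ together with the analytic function $\phi_1(\sigma(U_2))$, where $U_2$ is the controlling operator for $\scrd_2$, defines a map $\scrx_1^{\circ}\to\scrw_2\times\mathbf{A}^1$ which factors through the Fredholm hypersurface $\scrz_2$; this is where reducedness of the core and the accumulation (not mere density) hypothesis on $\scrz_1^{\cl}$ do real work, since the locus mapping into the closed subspace $\scrz_2$ contains the classical points and the source is reduced. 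One must then lift along the finite map $\pi_2:\scrx_2\to\scrz_2$, which is done either by verifying the requisite algebra relations in $\pi_{2*}\calo_{\scrx_2}$ on the classical points and extending by reducedness, or by the closure-of-the-graph argument inside $\scrx_1^{\circ}\times_{\scrz_2}\scrx_2$; none of this appears in your sketch beyond a gesture. Two further inaccuracies: your interpolation statement requires $\dim Y=\dim\scrw_2$, whereas $\scrx_1^{\circ}$ is equidimensional of dimension $\dim\scrw_1$, which is in general strictly smaller (as in this paper's application), and no correct version of the lemma needs that equality; and the density of $\pi_1^{-1}(\scrz_1^{\cl})$ in $\scrx_1^{\circ}$ itself uses that every top-dimensional component of the core dominates a component of $\scrz_1$, which deserves a word rather than being taken for granted.
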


\smallskip
\subsection{Universal eigenvariety} \label{eigenconstruct}
Throughout this section, let $G$ denote a connected reductive algebraic group over $\Q$ that is split at the prime $p$. Let $B, N, Z$ and $T$ denote a choice of a Borel subgroup, unipotent subgroup, the centre and maximal torus respectively. Let $I$ denote the Iwahori subgroup of $G(\Z_p)$ associated to the choice of $B$. In this subsection, we recall the definition and basic properties of the eigenvariety associated to $G$. 

The weight space $\scrw_G$ associated to $G$ is a rigid analytic space whose $\qpbar$ points are given by $\scrw_G (\qpbar) = \Hom_{\mathrm{cts}} (T(\Z_p), \qpbar^\times)$. For any open compact subgroup $K^p \subset G(\A_\Q^p)$, let $\scrw = \scrw_G (K^p)$ denote weight space of level $K^p$ which parametrizes continuous homomorphisms from the torus that are trivial on the closure of $Z(K^p I) \cap G(\Q) \subset T(\Z_p)$.  

A Hecke pair consists of a monoid $\Delta \subset G(\A_f)$ and a subgroup $K_f \subset \Delta$ such that $K_f$ and $\delta K_f \delta_f^{-1}$ are commensurable for all $\delta \in \Delta$. We will denote by $\T (\Delta, K_f)$ the $\Q_p$-algebra generated by double cosets $T_\delta = [K_f \delta K_f]$ under the convolution product. 

The algebra $\T$ is the eigenvariety data will be of the form
$$
\T = \T_G (K^p) = \scra^+_p \otimes \T^\mathrm{unr} (K^p),
$$
where $\scra^+_p$ is a certain subalgebra of the Iwahori Hecke algebra $\T (G(\Q_p), I)$ and the unramified Hecke algebra is a commutative algebra given by 
$$
\T^\mathrm{unr} (K^p) = {\bigotimes_{v \not \in S}}^\prime \T (G(\Q_v), K^p_v)
$$
for a finite set of primes $S$. In practice, $K^p_v$ will be a hyperspecial maximal subgroup of $G(\Q_v)$, which ensures the commutativity of the Hecke algebra. There is a subclass of operators in $\scra_p^+$ which are called controlling operators which play a crucial role in the construction of the eigenvariety.

We now proceed to describe the construction of the coherent sheaf $\scrm$ along with the action of the Hecke algebra. Let $\bar{B}$ and $\bar N$ respectively denote the opposite Borel and unipotent subgroup to $B$ and $N$. For any $s \in \Z_{\ge 0}$, define $\bar B^s = \{ b \in \bar B (\Z_{p}) \mid b \equiv 1 \in G (\Z_p/p^s \Z_p) \}$. Similarly, define ${T}^{s}= {T}(\Z_{p}) \cap \bar B^{s}$, $\bar N^{s}= \bar N (\Z_{p}) \cap \bar B^{s}$ and $I^{s} = I \cap \mathrm{Ker}\{G (\Z_{p}) \to {G}(\Z_p/ p^{s} \Z_p)   \}.$ Furthermore, we define $I^s_1 = \{ g\in I \mid g \mod p^s \in \bar N (\Z/p^s/\Z) \}$.

Let $\Omega \subset \scrw$ be an admissible affinoid open subset. For $\Omega$, the tautological character induced from $id \in \scrw (\scrw)$ is denoted by
$$
\chi_{\Omega}: {T}(\Z_{p}) \to \calo(\Omega)^{\times}.
$$
Let $s[\Omega]$ denote the smallest integer for which $\chi_{\Omega}|_{{T}^{s[\Omega]}}$ is analytic. For $s \geq s[\Omega]$, define
$$
A_{\Omega}^{s} =\Biggl\{ f: {I} \to \calo(\Omega) \bigg|  \begin{array}{c} 
f \text{ is analytic on each } {I}^{s} \text{-cosets and } \\
f(gtn) = \chi_{\Omega}(t) f(g) \text{ for all } n \in {N}(\Z_{p}), t \in {T}(\Z_{p}), g \in {I}
\end{array}    \Biggr\}.
$$
Via the map $f \mapsto f|_{\bar N^{1}}$, we identify $A_{\Omega}^{s}$ with the space of $s$-locally analytic $\calo(\Omega)$ valued functions on $\bar N^{1}$. Hence $A_{\Omega}^{s}$ is endowed with a Banach $\calo[\Omega]$-module structure. We have natural injective, compact transition maps $A_{\Omega}^{s} \to A_{\Omega}^{s+1}$ and taking direct limit with respect to these transition maps, we define
$$
\cala_{\Omega}= \varinjlim_{s} A_{\Omega}^{s}.
$$
The corresponding distribution space is defined as the continuous $\calo (\Omega)$-linear dual of $\cala_{\Omega}$,
$$
\mathcal{D}_{\Omega}= \{ \mu: \cala_{\Omega} \to \calo(\Omega)| \mu \text{ is } \calo(\Omega) \text{-linear and continuous} \}.
$$
If $\lambda \in \scrw$ is any point, we similarly define the modules $A_{\lambda}^{s}, \cala_{\lambda}$ and $\cald_{\lambda}$. 

Fix a controlling operator $U \in \scra_p^+$. For $\Omega$ as above, there exist complexes $C_\bullet (K^p I , \cala_\Omega)$ and $C^\bullet (K^p I, \cald_\Omega)$ that admit an extension $\tilde U$ of the controlling operator $U$.  We remark that the controlling operator acts as compact operator on $C_\bullet (K^p I , \cala_\Omega)$ and $C^\bullet (K^p I, \cald_\Omega)$. Let $f_\Omega (X)$ denote the Fredholm series associated to this action on $C_\bullet (K^p I, \cala_\Omega)$. Then these functions patch together to give Fredholm series $f \in \calo (\scrw)\{\{X\}\}$. The spectral variety $\scrz = \scrz_G$ is defined as the Fredholm hypersurface $\scrz_f$ associated to this series.

Given an affinoid open subset $\Omega \subset \scrw$ and $h \in \Q$, there is exists a corresponding affinoid open subset $\scrz_{\Omega, h} \subset \scrz$. We call this slope-adapted if $f_\Omega = f|_{\calo(\Omega)\{\{X\}\}}$ has a slopge-$\le h$ decomposition. The slope-adapted affinoids form an admissible open cover of $\scrz$. We also know that if $\Omega$ is slope adapted for $h$, then $C_\bullet (K^p I, \cald_\Omega)$ admits a slope-$\le h$ decomposition.  

Moreover, there exists a unique complex of coherent analytic sheaves, $\mathscr{K}^{\bullet}$,  on $\scrz$ such that $\mathscr{K}^\bullet (\scrz_{\Omega , h})= C^{\bullet}(K^{p} {I}, \cald_{\Omega})_{\le h}$ for any slope-adapted $\scrz_{\Omega,h}$. Taking cohomology of $\mathscr{K}^\bullet$, we get a graded sheaf $\mathscr{M}^{\ast}$ on $\scrz$, such that $\mathscr{M}^{\ast} (\scrz_{\Omega,h}) = H^{\ast}(K^{p} {I}, \cald_{\Omega})_{\le h}$. This sheaf comes equipped with a Hecke action, which we denote by $\psi: \T \to \End_{\calo_{\scrz}} (\mathscr{M}^*)$. Finally, we take our coherent sheaf $\scrm$ in the eigenvariety data to be the graded sheaf $\scrm^*$.

To summarise, the eigenvariety datum given by
$$
\scrd= (\scrw, \scrz, \scrm, \T, \psi)
$$
gives rise to the eigenvariety associated to $G$ via Buzzard's machinery. The eigenvariety $\scrx = \scrx (\scrd)$ thus constructed is a separated rigid analytic variety along with
\begin{enumerate}
\item[(i)] a finite morphism $\pi: \scrx \to \scrz$,
\item[(ii)] a morphism (weight map) $w: \scrx \to \scrw$,
\item[(iii)] an algebra homomorphism $\phi_{\scrx}: \T \to \calo(\scrx)$, and
\item[(iv)] a coherent sheaf $\mathscr{M}^{\dagger}$ on $\scrx$ together with a canonical isomorphism $\scrm \cong \pi_{\ast} \scrm^{\dagger}$ compatible with the action of $\T$.
\end{enumerate}

\smallskip
\subsection{Points on the eigenvariety and refinements.}
The points on the eigenvariety lying over $z \in \scrz$ are in bijection with the generalized eigenspaces for the action of $\T$ on $\mathscr{M} (z)$. 

\begin{definition}
A finite-slope eigenpacket of weight $\lambda \in \scrw (\qpbar)$ and level $K^{p}$ is a algebra homomorphism $\phi: \T \to \qpbar$ such that the space
$$
\{ v \in H^{\ast}(K^{p}{I}, \cald_{\lambda}) \otimes_{k_{\lambda}} \qpbar \mid T \cdot v = \phi(T)v \text{ for all } T \in \T \text{ and } \phi(U) \neq 0   \}
$$
is nonzero, that is, there exists a non-zero eigenvector acting by the homomorphism $\phi$. Here $U$ is the controlling operator fixed earlier.
\end{definition}

Another way to define finite-slope eigenpacket is as follows. We denote by ${\mathbb{T}}_{\lambda,h}(K^{p})$ the subalgebra of $\End_{k_{\lambda}}(H^{\ast}(K^{p} {I}, \cald_{\lambda})_{\le h})$ generated by the image of $\T \otimes_{\Q_{p}} k_{\lambda}$. We define the algebra ${\mathbb{T}}_{\lambda}(K^{p})= \varprojlim_{h} {\mathbb{T}}_{\lambda,h}(K^{p})$. Finite-slope eigenpackts of weight $\lambda$ and level $K^p$ can be identified with algebra homomorphisms $\phi: {\mathbb{T}}_{\lambda}(K^{p}) \to \qpbar$.

 Given any point $x \in \scrx (\qpbar)$ on the eigenvariety, we can naturally define an algebra homomorphism
$$
\xymatrix{
\phi_{{x}} : \T \ar[r]^-{\phi_{\scrx}}  & \calo(\scrx)  \ar[r] &  \calo_{\scrx,x} \ar@{->>}[r] & k_{x}}
$$
 called the eigenpacket parametrized by $x$. The points $x \in \scrx (\qpbar)$ lying over a given weight $\lambda \in \scrw (\qpbar)$ are in bijection with the finite-slope eigenpackets for ${G}$ of weight $\lambda$ and level $K^{p}$ obtained by sending $x \mapsto \phi_x$.
 
 Before we discuss the notion of refinements, we need to make explicit the Hecke operators that we consider at $p$. Let $\Phi^+$ denote the positive roots for the choice of the Borel subgroup $B$. Define the semi-group
 $$
 T^+ = \{ t \in T(\Q_p) \mid v_p (\alpha (t)) \le 0, \forall \alpha \in \Phi^+ \},
 $$
 and similarly
  $$
 T^{++} = \{ t \in T(\Q_p) \mid v_p (\alpha (t)) < 0, \forall \alpha \in \Phi^+ \}.
 $$
 For the monoid $\Delta = \Delta_p = I T^+ I$, we consider the Hecke operators $\scra_p^+ = \T (\Delta, I)$. Moreover, we define the space of Atkin-Lehner operators $\scra_p \subset \T (G(\Q_p), I)$ as the subalgebra of the Iwahori Hecke algebra generated by $U_t$ and $U_t^{-1}$, where $U_t = [ItI]$ denotes the double coset operators for $t \in T^+$. Furthermore, we take our controlling operators to be of the form $U_t$ where $t \in T^{++}$. See \cite[\S 2]{Hans} for further details.
 
Now suppose that locally $G$ is of the form $\GL_n/\Q_p$. Then, we take the following generators for $\scra_p^+$ and $\scra_p$. We denote by $U_{p,i}$ the element in $\scra_p^+$ given by the diagonal matrix
$$
(1, \dots, 1, p, \dots, p)
$$
where $p$ occurs $i$ times. We let $u_{p,i} = U_{p, i-1}^{-1} U_{p,i} \in \scra_p$.

Let $\pi_{p}$ be an unramified irreducible representation of ${\GL_n}(\Q_{p})$ defined over $L$. Denote by $r: \WD (\Q_p) \to \GL_n (L)$ the Weil--Deligne representation associated to $\pi_p$. Here $\WD(\Q_p)$ denotes the Weil--Deligne group of $\Q_p$. Let $\phi_{1},\dots,\phi_{n}$ be any ordering of eigenvalues of $r(\Frob_{p})$. This ordering of eigenvalues gives rise to a character $\chi$ of $\scra_{p}$ by the formula $\chi(u_{p,i})=p^{1-i} \phi_{i}$. The character $\chi$ is called a refinement of $\pi_{p}$. 

There exists a vector $0 \neq v \in \pi_{p}^{{I}}$, such that $\scra_{p}$ acts on $v$ by $\chi$.
If $\pi$ is a classical automorphic representation on ${G}$ such that $\pi_{p}$ is unramified and if $x \in \scrx (\qpbar)$ corresponds to $\pi$, then  we obtain a refinement of $\pi_{p}$ by considering $\phi_{{x}} |_{\scra_{p}}$. That is, each classical automorphic representation appears roughly $n!$ times in $\scrx$. Hence, we often denote classical points in the eigenvariety as a tuple $(\pi, \chi)$.

\smallskip
\subsection{Control theorem}
In this section, we define the notion of arithmetic weight and the space of classical automorphic forms above such weights. We then state the control theorem due to Ash--Stevens and Urban, which relates overconvergent automorphic forms with small slopes and classical automorphic forms. 

Let ${X}^{\ast}$ denote the integral weight lattice for $G$ and let  ${X}^{\ast}_{+} \subset X^\ast$ denote the subset of ${B}$-dominant weights. We call a weight $\lambda \in \scrw$ arithmetic if $\lambda = \lambda_1 \epsilon$ where  $\epsilon$ is a finite order character of ${T}(\Z_{p})$ and $\lambda_1 \in {X}^{\ast}$. Let $s[\epsilon]$ be the smallest integer such that $\epsilon$ is trivial on $T^{s[\epsilon]}$. Moreover, we say $\lambda$ is dominant arithmetic if $\lambda_1 \in {X}^{\ast}_{+}$. For the dominant weight $\lambda_1$, let $\mathscr{L}_{\lambda_1}$ denote the highest weight representation and let $f_{\lambda_1}$ denote the highest weight vector  associated to $\lambda_1$.

 The control theorem due to Ash--Stevens \cite{AS} and Urban \cite{Urb} which is a generalization of control theorem due to Stevens \cite{S} and Chenevier \cite{Che1,BC} relates the space of overconvergent automorphic forms $H^{\ast}(K^{p} {I}, \cald_{\lambda})$ and the classical automorphic forms $H^{\ast}(K^{p} I_1^{s}, \mathscr{L}_{\lambda_1})$, where $I^s_1 \subset I$ are the subgroups defined earlier.

Next, we note that, for $g \in {G}$ and $i \in {I}$ the function $f_{\lambda_1}(gi)$ defines an element in $\mathscr{L}_{\lambda_1} \otimes \cala_{\lambda}$ and pairing it with $\mu \in \cald_{\lambda}$ we obtain a map $i_{\lambda}: \cald_{\lambda} \to \mathscr{L}_{\lambda_1}$, which we symbolically write as 
$$
i_{\lambda}(\mu)(g) = \int f_{\lambda_1}(gi) \mu(i).
$$
Then $i_{\lambda}$ induces a morphism
$$
i_{\lambda}: H^{\ast}(K^{p} {I}, \cald_{\lambda}) \to H^{\ast}(K^{p} {I}_{1}^{s}, \mathscr{L}_{\lambda_1})
$$
for any $s \geq s[\epsilon]$. This map is an intertwining operator for the action of the Hecke algebra $\T$. For the definitions of the standard action of $\T$ on $H^{\ast}(K^{p} {I}, \cald_{\lambda})$ and the $\star$-action in weight $\lambda_1$ of $\T$ on $H^{\ast}(K^{p} {I}_{1}^{s}, \mathscr{L}_{\lambda_1})$, see \cite[\S 2.1]{Hans}.

Let $W$ denote the Weyl group of the torus $T$. For a controlling operator $U$, we call $h \in \Q$ small slope for a dominant arithmetic weight $\lambda = \lambda_1 \epsilon$ if 
$$
h < \inf_{w \in W \setminus \{1\}} v_{p}(w \cdot \lambda_1 (U)) - v_{p}(\lambda_1 (U)),
$$
where $v_p$ denotes the $p$-adic valuation. Recall that the Weyl group $W$ acts on weights by the rule $ w \cdot \mu = (\mu+\rho)^{w} - \rho$, where ${\rho} \in {X}^{\ast} \otimes_{\Z} \frac{1}{2} \Z$ denotes half the sum of positive roots with respect to $B$.

\begin{theorem}\label{controlthm} \cite[Theorem 3.2.5]{Hans}
For the controlling operator $U$, if $h$ is a small slope for a dominant arithmetic weight $\lambda = \lambda_1 \epsilon$, then there exists a natural isomorphism of Hecke modules
$$
H^{\ast}(K^{p} {I}, \cald_{\lambda})_{\le h} \cong H^{\ast}(K^{p} {I}_{1}^{s}, \mathscr{L}_{\lambda_1})_{\le h}^{{T}(\Z/p^{s}\Z)=\epsilon}.
$$
for all $s \ge s[\epsilon]$.
\end{theorem}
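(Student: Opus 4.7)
The plan is to prove the theorem along the lines of Ash--Stevens and Urban, which extends earlier work of Stevens and Chenevier. The key mechanism is a $p$-adic BGG-type short exact sequence of coefficient modules combined with slope bounds on Weyl-translated pieces.

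First, I would construct a Hecke-equivariant short exact sequence of coefficient modules of the form
$$
0 \to \mathscr{L}_{\lambda_1}^{\vee} \xrightarrow{\,i_\lambda^{\vee}\,} \cald_\lambda \to \calc_\lambda \to 0,
$$
where the injection is (dual to) the surjection $\cala_\lambda \twoheadrightarrow \mathscr{L}_{\lambda_1}$ that extracts the algebraic highest-weight part, i.e., the integration map $i_\lambda$ of the theorem. The cokernel $\calc_\lambda$ is where the $p$-adic BGG decomposition enters: I would produce a Hecke-stable filtration of $\calc_\lambda$ whose graded pieces are identified (up to the finite-order twist by $\epsilon$) with the distribution modules $\cald_{w\cdot \lambda_1 \epsilon}$ for non-identity elements $w \in W$, where $W$ acts via the dot action $w\cdot \mu = (\mu+\rho)^w - \rho$.

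Second, I would carry out the slope analysis. On each graded piece $\cald_{w\cdot \lambda_1 \epsilon}$, the controlling operator $U$ acts with slopes bounded below by $v_p((w\cdot\lambda_1)(U))$. After renormalizing by the action on $\cald_\lambda$ itself (which has slope $\geq v_p(\lambda_1(U))$), the small-slope hypothesis
$$
h < \inf_{w \in W\setminus\{1\}} v_p(w\cdot\lambda_1(U)) - v_p(\lambda_1(U))
$$
forces the slope $\le h$ part of $H^{\ast}(K^p I, \calc_\lambda)$ to vanish. Taking the long exact sequence of cohomology attached to the short exact sequence above and restricting to the slope $\le h$ subspace therefore gives a Hecke-equivariant isomorphism
$$
H^{\ast}(K^p I, \cald_\lambda)_{\le h} \;\xrightarrow{\sim}\; H^{\ast}(K^p I, \mathscr{L}_{\lambda_1}^{\vee})_{\le h}.
$$

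Third, I would identify the target with the right-hand side of the theorem. For $s \ge s[\epsilon]$, the subgroup $I_1^s \subset I$ is normal with quotient isomorphic to $T(\Z/p^s\Z)$, and the coefficient system attached to $\mathscr{L}_{\lambda_1}^{\vee}$ at level $K^p I$ is the $\epsilon$-isotypic part of the inflation of $\mathscr{L}_{\lambda_1}$ from $K^p I_1^s$. A Shapiro/inflation argument then produces the natural isomorphism
$$
H^{\ast}(K^p I, \mathscr{L}_{\lambda_1}^{\vee})_{\le h} \;\cong\; H^{\ast}(K^p I_1^s, \mathscr{L}_{\lambda_1})_{\le h}^{T(\Z/p^s\Z)=\epsilon},
$$
where Hecke-equivariance has to be checked against the standard action on the left and the $\star$-action on the right, using the conventions recalled from \cite[\S 2.1]{Hans}.

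The hard part is the first step: producing the BGG filtration of $\calc_\lambda$ with the correct graded pieces and proving the slope lower bound on each piece. This is the technical core of the theorem; I would follow the overconvergent BGG construction in \cite{AS,Urb}, where the filtration is built from locally analytic intertwiners between $\cala_\lambda$ and $\cala_{w\cdot\lambda}$, and the slope bound on $U$ comes from how $T^{++}$ dilates $\bar N^1$ combined with the weight shift induced by $w$.
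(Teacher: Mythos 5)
The paper does not prove this theorem at all: it is imported verbatim from \cite[Theorem 3.2.5]{Hans}, whose proof (following Ash--Stevens \cite{AS} and Urban \cite{Urb}, going back to Stevens \cite{S}) is exactly the ``locally analytic BGG filtration plus slope bound'' strategy you outline, so your overall architecture matches the cited argument. However, your first step is set up in the wrong direction, and as written it fails. In the normalization of the paper (and of Hansen), the algebraic representation sits \emph{inside} the locally analytic induction: the function $i \mapsto f_{\lambda_1}(gi)$ lies in $\mathscr{L}_{\lambda_1} \otimes \cala_\lambda$, so the natural Hecke-equivariant map is the \emph{surjection} $i_\lambda: \cald_\lambda \twoheadrightarrow \mathscr{L}_{\lambda_1}$, dual to an inclusion $\mathscr{L}_{\lambda_1} \hookrightarrow \cala_\lambda$ (up to the $\epsilon$-twist). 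The map you posit, a $\Delta_p$-equivariant surjection $\cala_\lambda \twoheadrightarrow \mathscr{L}_{\lambda_1}$ inducing an injection $\mathscr{L}_{\lambda_1}^{\vee} \hookrightarrow \cald_\lambda$, does not exist: its existence would split the algebraic part off $\cala_\lambda$ equivariantly, whereas the whole point of the BGG analysis is that this extension is non-split (already for $\GL_2$, Stevens' fundamental exact sequence $0 \to \ker(i_\lambda) \to \cald_k \to \mathscr{L}_k \to 0$ does not split, and the slope condition is what substitutes for a splitting). The correct sequence is $0 \to \ker(i_\lambda) \to \cald_\lambda \to \mathscr{L}_{\lambda_1} \to 0$, with the Hecke-stable (BGG/Kostant) filtration placed on $\ker(i_\lambda)$, whose graded pieces are twists of $\cald_{w\cdot\lambda}$ for $w \in W \setminus \{1\}$.

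With that correction, the rest of your plan is the standard proof and goes through: the controlling operator acts on the cohomology of each graded piece with slope at least $v_p((w\cdot\lambda_1)(U)) - v_p(\lambda_1(U))$ (after the rescaling built into the $\star$-action), so the small-slope hypothesis kills $H^{\ast}(K^p I, \ker i_\lambda)_{\le h}$, and the long exact sequence shows that $i_\lambda$ itself induces the isomorphism. This matters beyond aesthetics: the ``natural isomorphism'' of the theorem is the one induced by $i_\lambda$, i.e.\ from distribution-valued to $\mathscr{L}_{\lambda_1}$-valued cohomology, and your reversed sequence would at best produce a map in the opposite direction. Your third step (passing from level $I$ with the twisted coefficient to level $I_1^s$ with the $T(\Z/p^s\Z)=\epsilon$ isotypic part, and checking standard versus $\star$-action conventions) is fine, since the coefficients are $\Q_p$-vector spaces and the isotypic projector exists in characteristic zero. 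The technical core you defer --- constructing the filtration with the stated graded pieces and proving the slope lower bound --- is precisely the content of \cite{AS} and \cite{Urb} that \cite{Hans} invokes, so once the dualization is fixed your sketch is the same proof as the one the paper cites.
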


\medskip
\section{Eigenvarieties attached to $G_i$ and an auxiliary eigenvariety}\label{sec:relevant-eigenvarieties}

For the fixed odd prime $p$ that is unramified in $K$, let  $\mathcal{O}_{p} = \mathcal{O}_{K} \otimes \Z_{p}$. If $p$ splits  as $\gp \gp^c$ in $K$, then $\calo_p = \calo_{K, \gp} \times \calo_{K, \gp^c}$. If $\alpha \in \calo_p$, then write $\alpha = (\alpha_1, \alpha_2) \in \calo_{K, \gp} \times \calo_{K, \gp^c}$. On the other hand, if $p$ is inert in $K$, then $\calo_p$ is a degree $2$ extension of $\Z_p$. Given $\alpha \in \calo_p$, let $\alpha_1$ and $\alpha_2$ denote its Galois conjugates. 

\smallskip
\subsection{Weight spaces} 
We first start with a description of the weight spaces for $G_1 = \Res_{K/\Q} \mathbf{G}_1$. Let $\mathbf{B}_1$ and $\mathbf{T}_1$ denote the standard Borel subgroup and maximal torus in $\mathbf{G}_1$. Let $B_1 = \Res_{K/\Q} \mathbf{B}_1$ and $T_1 = \Res_{K/\Q} \mathbf{T}_1$ denote the corresponding subgroups in $G_1$. Let $Z_1$ denote the center of $G_1$ and let $I_1$ denote the Iwahori subgroup of $G_1$ (with respect to $B_1$). 

We take our level structure to be $K_{1} = \prod_v \GL_{2}(\calo_{K,v})$, where the product runs over all the non-archimedean places of $K$. For any $\Q_{p}$ affinoid algebra $A$, we have
$$
\scrw_{1}(A) := \scrw_{K_{1}^{p},G_1}(A)=\{ \kappa : T_1 (\Z_{p}) \to A^{\times} \mid \kappa \text{ is trivial on } \overline{ Z_{1}(\Q) \cap K_{1}^{p}I_1}    \}.
$$
Suppose that $n=(n_{1},n_{2})$ and $v=(v_{1},v_{2})$ are weights with $n+2v= mt$ as before. Let $\kappa: \mathbf{T}_1 (\calo_p) \to \qpbar^{\times}$ be the map, $\kappa(\alpha,\beta)= \prod_{i=1}^{2} \alpha_{i}^{n_{i} + v_i} \beta_{i}^{v_{i}}.$ The unit group $\calo_K^\times$ sits in $\mathbf{T}_1 (\calo_p)$ diagonally as $\gamma \mapsto (\gamma, \gamma)$.  Then $\kappa$ is in the weight space $\scrw_{1}$ if $\kappa$ is trivial on $\mathcal O_{K}^{\times}$ (not just on the totally positive units). If $k=n+2t$ and $w=v+n+t$, then Hilbert modular forms of weight $(k,w)$ (in the sense of Hida) has weight $\kappa$ in the weight space $\scrw_{1}$.

\begin{remark}
Note that we adopt a slightly different normalization, than usual, in our  definition of weight spaces. The usual normalization sends $(n,v)$ to $\kappa$ that maps $(\alpha, \beta) \mapsto \prod_{i=1}^2 \alpha_i^{n_i} \beta_i^{v_i}$ and the units $\calo_K^\times$ embeds as $\gamma \mapsto (\gamma, \gamma^2)$. See Buzzard~\cite{Buz}, for example.
\end{remark}

We now describe the weight space for the group $G_2 = \GL_4/\Q$. With our notation as before, let $B_2$ and $T_2$ denote the standard Borel and maximal torus of $G_2$. Let $Z_2$ denote the center of $G_2$ and let $I_2$ denote the Iwahori subgroup with respect to $B_2$.

We take our level structure to be $K_2 = \prod_{\ell} \GL_{4}(\Z_{\ell})$, where the product runs over all integer primes $\ell$. And the weight space $\scrw_{2}$ is defined similarly. For any $\Q_{p}$ affinoid algebra $A$, we have
$$
\scrw_{2}(A) := \scrw_{K_2^{p},G_2}(A)= \left\{ \chi: T_{2} (\Z_{p}) \to A^{\times}\ \biggl | \begin{array}{c} 
\chi \textrm{ is trivial on the closure} \\
\textrm{of } Z_{2}(\Q) \cap K_2^{p}I_2 
\end{array}   \right\}.
$$

\smallskip
\subsection{Hecke algebras}
Let $S$ denote the set of primes of $\Q$ which ramify in $K$ and let $\tilde{S}$ be the set of places of $K$ lying above primes in $S$. Let $S_{p}$ denotes the set of places in $K$ above $p$. 

For the group $G_1$, we define our unramified Hecke algebra as the commutative algebra
$$
\T^\mathrm{unr}_1 = \sideset{}{^\prime} {\bigotimes_{v \not \in S \cup \{ p \}}} \T (G_1 (\Q_v), G_1 (\Z_v)).
$$
Note that we are omitting Hecke operators at primes that are ramified in the quadratic extension $K$. At the prime $p$, we define a subrings $\scra_{1,p}^{+} \subset \scra_{1, p} \subset \T (G_1 (\Q_p), I_1)$, as before. Specifically, let $\Phi_1^+$ denote the set of positive roots for $B_1$. We define two semigroups $T_1^+$ and $T_1^{++}$ inside ${T_1}(\Q_{p})$ as
\begin{gather*}
T_1^{+}= \{ t \in {T_1}(\Q_{p}) \mid v_{p} (\alpha(t)) \le 0 \textrm{ for all } \alpha \in \Phi_1^{+} \} , \textrm{and} \\
T_1^{++}= \{ t \in {T_1}(\Q_{p}) \mid v_{p} (\alpha(t)) < 0 \text{ for all } \alpha \in \Phi_1^{+} \} .
\end{gather*}
Suppose for the moment that $p$ splits in $K$ and suppose that $t = (t_1, t_2) \in T_1 (\Q_p)$, where $t_1=\mathrm{diag}(p^{a_{1}}, p^{a_{2}})$ and $t_2 = \mathrm{diag}(p^{b_{1}}, p^{b_{2}})$. Then $t \in T_1^{+}$ if and only if $ a_{1} \leq a_{2}$ and $b_1 \leq b_2$. The same $t$ belongs to $T_1^{++}$ if and only if all the above inequalities are strict. We have similar conditions when $p$ is inert in $K$.

 For any $t \in T_1^{+} \cap {G_1}(\Z_{p})$, the double coset operators $U_{t}=[{I_1} t{I_1}]$ generate the algebra $\scra_{1, p}^{+}$.  The Atkin--Lehner algebra $\scra_{1, p}$ is a commutative subalgebra of $\T (G_1 (\Q_p), I_1)$ generated by $U_{t}$ and $U_{t}^{-1}$ with $t \in T_1^{+} \cap {G_1}(\Z_{p})$. We can naturally identify 
$$
\scra_{1, p}^{+} \cong \Q_{p} [T_1^{+} \cap {G_1}(\Z_{p})] \text{ and } \scra_{1, p} \cong \Q_{p}({T_1}(\Q_{p})/ {T_1}(\Z_{p})).
$$
We will call an operator $U_{t} \in \scra_{1, p}^{+}$ a controlling operator if $t \in T_1^{++}$. Finally, we define the Hecke algebra as
$$
\T_1 := \scra_{1, p}^{+} \otimes_{\Q_{p}} \T_1^{\mathrm{unr}} .
$$

We also view the unramified Hecke Algebra $\T_1^\mathrm{unr}$ as a product of local Hecke algebras as
$$
\T_1^\mathrm{unr} = \otimes^{\prime}_{\fr l \notin \tilde{S} \cup S_{p}} \T_{1, \fr l},
$$
where the local Hecke algebra $\T_{1, \fr l} = \T (\mathbf{G}_1 (K_{\fr l}), \mathbf{G}_1 (\calo_{K, \fr l}))$. For a place $\fr l$ of $K$ not in $\tilde{S} \cup S_{p}$, let  $\varpi_{\fr l}$ denote the uniformizer at $\fr l$. We denote by $T_{\fr l}$ and $S_{\fr l}$ the double coset operators $\Big[\GL_{2}(\calo_{K, \fr l}) \bmatrix \varpi_{\fr l} & \\ & 1 \endbmatrix \GL_{2}(\calo_{K, \fr l}) \Big]$ and $\Big[\GL_{2}(\calo_{K, \fr l}) \bmatrix \varpi_{\fr l} & \\ & \varpi_{\fr l} \endbmatrix \GL_{2}(\calo_{K, \fr l}) \Big]$ respectively. The operators $T_{\fr l}$ and $S_{\fr l}$ generates the local Hecke algebra $\T_{1, \fr l}$. Let $\gp \in S_{p}$. Let $\varpi_{\gp}$ denotes the  uniformizer at $\gp$. We denote by $U_{\gp}$ and $S_{\gp}$ the double coset operators $\Big[\mathbf{I}_1 \bmatrix 1 & \\ & \varpi_{\gp} \endbmatrix \mathbf{I}_1 \Big]$ and $\Big[\mathbf{I}_1 \bmatrix \varpi_{\gp} & \\ & \varpi_{\gp} \endbmatrix \mathbf{I}_1 \Big]$, respectively. Here $\mathbf{I}_1$ is the Iwahori subgroup with respect to the Borel subgroup $\mathbf{B}_1$. Then $U_{\gp}$ and $S_{\gp}$ for all $\gp \in S_{p}$ generates the algebra $\scra^+_{1, p}$.
 
 We take our controlling operator to be
 $$
 U_{p} = \prod_{v |p} U_{v}.
 $$
 
We now come to the group $G_2$, where our definitions are similar. The unramified Hecke algebra can be written as 
 $$
\T_2^\mathrm{unr} = {\otimes^{\prime}_{\ell \notin S \cup \{p \}}} \T_{2, \ell};
$$
 where the local Hecke algebra $\T_{2, \ell} = \T (G_2 (\Q_\ell), G_2 (\Z_\ell))$ is generated by operators $T_{\ell,i}$ corresponding to the double coset of matrix
 $$
 \mathrm{diag}( \underbrace{\ell, \dots, \ell}_{i} , 1, \dots, 1),
 $$
where $i = 1, \dots, 4$. We define $\scra^+_{2, p}$ and $\scra_{2,p}$ similarly. For each $i = 1,\dots, 4$, we denote  by $U_{p,i}$, the element of $\scra_{2, p}^{+}$ corresponding to the matrix
  $$
 \mathrm{diag}(1,\dots,1, \underbrace{p, \dots, p}_{i}).
 $$
 The operators $U_{p,i}$ generates the algebra $\scra_{2, p}^{+}$. The operators $u_{p,i}:=U_{p,i}U_{p,i-1}^{-1} \in \scra_{2, p}$ generates the Atkin--Lehner algebra $\scra_{2, p}$. Finally, we take 
 $$
 U_{p}= U_{p,1}U_{p,2}U_{p,3} \in \scra_{2, p}^{+}
 $$
 as our choice for the controlling operator. 
 
  For $i=1,2$, from the definition of weight space $\scrw_{i}$ and the Hecke algebra $\mathbb{T}_{i}$ and the choice of controlling operator $U_{p}$, we construct the eigenvariety $\scrx_{i}$ as in Section \ref{eigenconstruct}. This is the universal eigenvariety associated to the group $G_{i}$.

\smallskip
\subsection{The eigenvariety $\scrx$} 
We have previously constructed the eigenvarieties $\scrx_i$ associated to the groups $G_i$. In this section, we construct an auxiliary eigenvariety $\scrx$, which plays a role in the construction of the $p$-adic Asai transfer map when $p$ is inert.

Let $\scrd_2 = (\scrw_2, \scrz_2, \scrm_2, \T_2, \psi_2 )$ denote the eigenvariety datum associated to the eigenvariety $\scrx_2$. In order to construct $\scrx$, we only modify the Hecke algebra and keep the other objects the same as in $\scrx_2$.

Let $\tilde{T}_2^{+}$ denotes the subgroup $T_2^+$ whose elements are
$$
\tilde{T}_2^{+}=  \Biggl\{ \bmatrix p^{a_{1}} & \\ & p^{a_{2}} \\ & & p^{a_{3}} \\ & & & p^{a_{4}} \endbmatrix \bigg|  \begin{array}{c} 
a_{i} \in \mathbb{N} \cup \{0 \}, a_{1} \leq a_{2} \leq a_{3} \leq a_{4} \\
\text{ and } a_{3}-a_{2} \in 2\mathbb{N}\cup \{0 \}
\end{array}    \Biggr\}.
$$
Analogously, define
$$
\tilde{T}_2^{++}= \{ t \in \tilde{T}_2^{+} \mid a_{1} < a_{2} < a_{3} < a_{4} \}.
$$
Let $\tilde{\scra}_{2, p}^{+}$ (resp. $\tilde{\scra}_{2, p}$) denotes the  $\Q_{p}$ algebra generated by $U_{t}$ with $t \in \tilde{T}_2^{+}$ (resp. by $U_{t}, U_{t}^{-1}$ with $t \in \tilde{T}_2^{+}$). We call $U_{t} \in \tilde{\scra}_{2, p}^{+}$ a controlling operator if $t \in \tilde{T}_2^{++}$.

We denote by $\tilde{U}_{p,1},\tilde{U}_{p,2}, \tilde{U}_{p,3}$ and $\tilde{U}_{p,4}$ the double coset operators corresponding to the following matrices
$$
\begin{array}{cccc}
 \bmatrix 1 & \\ & 1 \\ & & 1 \\ & & & p \endbmatrix , &  \bmatrix 1 & \\ & 1 \\ & & p^2 \\ & & & p^2 \endbmatrix , &  \bmatrix 1 & \\ & p \\ & & p \\ & & & p \endbmatrix \text{ and} & \bmatrix p & \\ & p \\ & & p \\ & & & p \endbmatrix ,
\end{array}
$$
respectively. Then they generates the algebra $\tilde{\scra}_{2, p}^{+}$. We have a natural choice for the controlling operator
$$
\tilde{U}_{p}= \tilde{U}_{p,1} \tilde{U}_{p,2} \tilde{U}_{p,3} = \Bigg[ I_2 \bmatrix 1 & \\ & p \\ & & p^3 \\ & & & p^{4}  \endbmatrix I_2 \Bigg].
$$
Define $\tilde{u}_{p,1}= \tilde{U}_{p,1}, \tilde{u}_{p,2}= \tilde{U}_{p,2} (\tilde{U}_{p,1}^{-1})^{2}, \tilde{u}_{p,3}= \tilde{U}_{p,3} \tilde{U}_{p,2}^{-1} \tilde{U}_{p,1}$ and $ \tilde{u}_{p,4} = \tilde{U}_{p,4} \tilde{U}_{p,3}^{-1}$, then integral powers of $\tilde{u}_{p,i}$ generate the algebra $\tilde{\scra}_{2, p}$.

We define Hecke algebra $\tilde \T_2$ as
$$
\tilde \T_2 := \tilde{\scra}_{2,p}^{+} \otimes_{\Q_{p}} \T_2^\mathrm{unr},
$$
where $\T_2^\mathrm{unr}$ is the same as before.  Let $id: \tilde \T_2 \hookrightarrow \T_2$ denote the natural injection of Hecke algebras. Let $\scrd$ denotes the eigenvariety datum
$$
\scrd=(\scrw_{2}, \scrz_{2}, \scrm_{2}, \tilde \T_2, \psi_{2}|_{\tilde \T_2})
$$
and $\scrx$ denote the associated eigenvariety.

Let $\pi_p$ be an automorphic representation of $G_2 (\Q_p)$. We call a character $\tilde{\chi}$ of $\tilde{\scra}_{2, p}$ an accessible refinement of $\pi_{p}$ in $\scrx$, if there exists a character $\chi$ of $\scra_{2, p}$, such that $\tilde{\chi} = \chi|_{\tilde \scra_{2, p}}$ and $\chi$ is a refinement of $\pi_{p}$ appearing in $\scrx_2$.

Then applying the comparison theorem, Theorem~\ref{compthm}, to the maps
\begin{enumerate}
\item[(i)] $id: \scrw_{2} = \scrw_2$,
\item[(ii)] $id: \tilde \T_2 \hookrightarrow \T_2$
\end{enumerate}
and any very Zariski-dense set $\scrz_2^{cl} \subset \scrz_2$, we obtain a morphism 
$$
Q: \scrx_2^\circ \to \scrx.
$$ 
of rigid analytic spaces. The map $Q$ is finite, {\'e}tale and surjective. If $x,y \in \scrx_2^\circ$, then $Q(x)=Q(y)$ if $T_{\ell,i}(x)=T_{\ell,i}(y)$ for all $\ell \neq p$ and $i=1,\dots,4$, $U_{p,i}(x)=U_{p,i}(y)$ for $i=1,3,4$ and $U_{p,2}(x)= \pm U_{p,2}(y)$.

\medskip
\section{$p$-adic Asai transfer}\label{sec:p-adic-asai-transfer}

In this section, we construct a rigid analytic map between the eigenvarieties attached to $\GL_2/K$ and $\GL_4 /\Q$. This map is constructed using the comparison theorem due to Hansen described in the previous sections. In order to apply the theorem, we need to construct compatible maps at the level of weight spaces and Hecke algebras. We also need to define a Zariski accumulation dense set of {\em classical} points in the spectral variety $\mathscr Z_1 (\qpbar)$.

We first describe the map between the weight spaces. For the weight spaces $\scrw_1$ and $\scrw_2$ defined in the previous section, we construct the map $j: \scrw_{1} \hookrightarrow \scrw_{2}$ as follows. For a weight $\kappa \in \scrw_{1}$, define 
$$
j(\kappa)(t_{1},t_{2},t_{3},t_{4}) = (t_1 t_2)^{-1} \kappa(t_{1}t_{2},t_{3}t_{4},t_{1}t_{3},t_{2}t_{4}).
$$
We note that, if $\kappa(t_{1}, t_2 , t_3 , t_4 ) = t_{1}^{n_{1}+v_{1}} t_{2}^{v_{1}} t_{3}^{n_{2}+v_{2}}t_{4}^{v_{2}}$, then,
\begin{align*}
j(\kappa)(t_{1},t_{2},t_{3},t_{4}) & = t_{1}^{n_{1}+n_{2}+v_{1}+v_{2}-1} t_{2}^{n_{1}+v_{1}+v_{2}-1} t_{3}^{n_{2}+v_{1}+v_{2}}t_{4}^{v_{1}+v_{2}} \\
& = t_{1}^{m-1+\frac{n_{1}+n_{2}}{2}} t_{2}^{m-1+\frac{n_{1}-n_{2}}{2}} t_{3}^{m-\frac{n_{1}-n_{2}}{2}}t_{4}^{m-\frac{n_{1}+n_{2}}{2}},
\end{align*}
where, $m=n_{1}+2 v_{1} = n_2 + 2 v_2$.

The map between the Hecke algebras and the set of Zariski accumulation dense classical points depend on whether $p$ is split or inert. We will consider these two cases separately. In fact, when $p$ is inert we only construct a map to the auxiliary eigenvariety $\scrx$ attached to $\GL_4/\Q$. 

\smallskip
\subsection{The case where $p=\gp \gp^c$ is split in $K$}
We first construct the map between Hecke algebras attached to $G_1$ and $G_2$. The Hecke algebra $\T_2$ is generated by the elements $T_{\ell, i}$ (for $\ell \neq p$ and unramified in $K$) and $U_{p, i}$ for $i = 1,\dots, 4$. In $\T_1$, we also have the standard Hecke operators $T_{\fr l}$ and $S_{\fr l}$ for $\fr l$ away from $p$ and $U_{\gp}$ and $S_{\gp}$ for $\gp | p$. 

We define a map $\sigma^\pm : \T_2 \to \T_1$ as follows:

\begin{tabular}{cc}
\begin{minipage}[t]{0.5\textwidth}
\centering{\underline{When $\ell = \fr l \fr l^c$}}
$$\!\begin{aligned}
T_{\ell, 1} & \mapsto T_{\fr l} T_{\fr l^c} \\
T_{\ell, 2} & \mapsto T^2_{\fr l} S_{{\fr l}^c}  + S_{\fr l} T_{\fr l^c}^{2} - 2 \ell S_{\fr l}  S_{\fr l^c}  \\
T_{\ell, 3} & \mapsto \ell^{-1} T_{\fr l} S_{\fr l} T_{\fr l^c} S_{\fr l^c} \\
T_{\ell, 4} & \mapsto \ell^{-2} S^2_{\fr l} S^2_{\fr l^c}
\end{aligned}$$
\end{minipage} &
\begin{minipage}[t]{0.5\textwidth}
\centering{\underline{When $\ell$ is inert}}
$$\!\begin{aligned}
T_{\ell, 1} & \mapsto \pm T_\ell \\
T_{\ell, 2}  & \mapsto 0 \\
T_{\ell, 3}  & \mapsto \mp \ell^{-1} T_\ell S_\ell \\
T_{\ell, 4} & \mapsto -\ell^{-2} S^2_\ell \\
\end{aligned}$$
\end{minipage}
\end{tabular}

\begin{align*}
U_{p, 1} & \mapsto U_\gp U_{\gp^c} \\
U_{p, 2} & \mapsto U^2_\gp S_{\gp^c} \\
U_{p, 3} & \mapsto p^{-1} U_{\gp} S_{\gp} U_{\gp^c} S_{\gp^c} \\
U_{p, 4} & \mapsto p^{-2} S^2_{\gp} S^2_{\gp^c}.
\end{align*}

We now justify the definition of $\sigma^\pm$. Let $\pi$ be an automorphic representation coming from a Hilbert modular form $f$ of weight $\kappa = (n,v)$ as before. Let us choose a refinement of $f$ such that $U_{\gp}f =\alpha_{\gp}f $ and $U_{\gp^c}f =\alpha_{\gp^c}f $ and let $x \in \scrx_{1}$ be the corresponding point. We denote the eigenpacket associated to $x$ by $\phi_{\pi, \{\alpha_\gp, \alpha_{\gp^c} \}}$. Assume that $n_{1}>n_{2}$, then by our normalization $\As^\pm (\pi)$ is cohomological of weight $j(\kappa)$. For any refinement $\chi$ of $\As^\pm(\pi)$, for primes $\ell \nmid p$, $\As^\pm (\pi)_{\ell}$ is an unramified representation and the Hecke operators $T_{\ell,i}$ act on the spherical vector via the scalar $(\phi_{\pi, \{\alpha_\gp, \alpha_{\gp^c} \}})(\sigma(T_{\ell,i}))$.

First suppose that $\ell$ splits as $\fr l \fr l^c$ in $K$. Denote by $\alpha_{\fr l}$ and $\beta_{\fr l}$, the  $\Frob_{\fr {l}}$ eigenvalues of $f$. Then the characteristic polynomial for $\Frob_{\fr {l}}$ is given by
$$
X^{2}-T_{\mathfrak{l}}X+N(\mathfrak{l})S_{\mathfrak{l}} = (X-\alpha_{\fr l})(X-\beta_{\fr l}).
$$
Hence $T_{\fr {l}}$ acts by $\alpha_{\fr l}+\beta_{\fr l}$ and $N(\fr {l})S_{\fr {l}}$ acts by $\alpha_{\fr l}\beta_{\fr l}$. Similarly, the characteristic polynomial for $\Frob_{\fr l^c}$ is given by
$$
X^{2}-T_{\fr l^c}X+N(\fr l)S_{\fr l^c} = (X-\alpha_{\fr l^c})(X-\beta_{\fr l^c})
$$
where $\alpha_{\fr l^c}$ and $\beta_{\fr l^c}$ are the $\Frob_{\fr l^c}$ eigenvalues of $f$. On the other hand, characteristic polynomial for $\Frob_{\ell}$ corresponding to $\As^\pm (\pi)_\ell$ is given by
$$ 
X^{4}- T_{\ell ,1}X^{3} + \ell T_{\ell ,2}X^{2}- \ell^{3}T_{\ell ,3}X+ \ell^{6}T_{\ell ,4}.
$$
From our earlier calculation, we know that the $\Frob_{\ell}$ eigenvalues on $\As^{\pm}(\pi)_{\ell}$ are $\alpha_{\fr l}\alpha_{\fr l^c},\alpha_{\fr l}\beta_{\fr l^c},\beta_{\fr l}\alpha_{\fr l^c}$ and $\beta_{\fr l}\beta_{\fr l^c}$. Thus
$$
X^{4}- T_{\ell,1}X^{3} + \ell T_{\ell,2}X^{2}-\ell^{3}T_{\ell,3}X+\ell^{6}T_{\ell,4}
= (X-\alpha_{\fr l}\alpha_{\fr l^c})(X-\alpha_{\fr l}\beta_{\fr l^c})(X-\beta_{\fr l}\alpha_{\fr l^c})(X-\beta_{\fr l}\beta_{\fr l^c}).
$$
From this, we see that $T_{\ell,1}$ acts by the eigenvalue $\alpha_{\fr l}\alpha_{\fr l^c}+\alpha_{\fr l}\beta_{\fr l^c}+\beta_{\fr l}\alpha_{\fr l^c}+ \beta_{\fr l}\beta_{\fr l^c}=(\alpha_{\fr l}+\beta_{\fr l})(\alpha_{\fr l^c}+\beta_{\fr l^c}).$
On the other hand we know that $T_{\ell,1}$ acts by $\phi_{\pi, \{\alpha_\gp, \alpha_{\gp^c}\}}(\sigma(T_{\ell,1}))$. Hence our definition
$$
\sigma(T_{\ell,1})= T_{\mathfrak{l}}  T_{\mathfrak{l}^c}.
$$
The calculations for $T_{\ell, i}$ when $i = 2, 3,4$ are similar.

Now assume that $\ell$ is inert in $K$. Let $\alpha_\ell$ and $\beta_\ell$ denote the $\Frob_{\ell}$ eigenvalues of $f$. Then $\Frob_{\ell}$ eigenvalues on $\As^{\pm}(\pi)$ are given by $\pm \alpha_\ell, \pm \sqrt{\alpha_\ell \beta_\ell}, \mp \sqrt{\alpha_\ell \beta_\ell}$ and  $\pm\beta_\ell$. Thus
 $$
 X^{4}- T_{\ell,1}X^{3} + \ell T_{\ell,2}X^{2}-\ell^{3}T_{\ell,3}X+\ell^{6}T_{\ell,4} 
 =(X\mp \alpha_\ell)(X \mp \sqrt{\alpha_\ell \beta_\ell})(X \pm \sqrt{\alpha_\ell \beta_\ell})(X \mp \beta_\ell).
$$
We see that $T_{\ell,1}$ acts by $\pm(\alpha_\ell +\beta_\ell)$, $T_{\ell,2}$  by $0$, $T_{\ell,3}$ by $\mp \ell^{-3} \alpha_\ell \beta_\ell (\alpha_\ell + \beta_\ell)$ and $T_{\ell,4}$ acts by $-\ell^{-6}(\alpha_\ell \beta_\ell)^{2}$. Hence our definition of $\sigma^\pm$ above.

The following lemma will justify the definition of $\sigma^\pm$ for Hecke operators supported at $p$. Note that, $N(\gp)S_{\gp}$ (resp.\ $N(\gp^c)S_{\gp^c}$) acts via the eigenvalue $\alpha_{\gp}\beta_{\gp}$ (resp.\ $\alpha_{\gp^c}\beta_{\gp^c}$).

\begin{lemma}
The module $\As^{\pm}(\pi)_{p}^{I_2}$ contains a vector $v^\pm$ on which $\scra_{2, p}$ acts via the character associated to the tuple $(p^{-3}\beta_{\gp}\beta_{\gp^c}, p^{-2}\beta_{\gp}\alpha_{\gp^c},p^{-1}\alpha_{\gp}\beta_{\gp^c},\alpha_{\gp}\alpha_{\gp^c})$. In particular, $U_{p}$ acts via the scalar $p^{3m-1}\alpha_{\gp}^{4}\alpha_{\gp^c}^{2}$.
\end{lemma}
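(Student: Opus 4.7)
The plan is to realize $v^\pm$ as one of the standard Iwahori-fixed eigenvectors of the unramified principal series $\As^\pm(\pi)_p$, and then to read off the Atkin--Lehner eigenvalues.

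First, since $p = \gp\gp^c$ splits in $K$, we have $G_1(\Q_p) \cong \GL_2(\Q_p) \times \GL_2(\Q_p)$ and $\pi_p = \pi_\gp \otimes \pi_{\gp^c}$, where each factor is unramified with Satake parameters $\{\alpha_\gp, \beta_\gp\}$ and $\{\alpha_{\gp^c}, \beta_{\gp^c}\}$. By the local calculation at the end of \S\ref{sec:asai-transfer}, the Asai transfer at a split place coincides with the tensor-product (Rankin--Selberg) lift; consequently, $\As^\pm(\pi)_p$ is isomorphic to the normalized unramified principal series $\Ind_{B_2}^{\GL_4(\Q_p)}(\mu_1 \otimes \mu_2 \otimes \mu_3 \otimes \mu_4)$ whose Satake parameters $\mu_i(p)$ are the four products $\alpha_\gp\alpha_{\gp^c},\, \alpha_\gp\beta_{\gp^c},\, \beta_\gp\alpha_{\gp^c},\, \beta_\gp\beta_{\gp^c}$. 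Note that this identification is independent of the sign $\pm$, since the sign only intervenes at inert places through the $\Gal(K/\Q)$-action, so both cases are handled simultaneously.

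Next, I invoke the classical theory of Iwahori-fixed vectors in unramified principal series (Casselman, Borel): the space $\As^\pm(\pi)_p^{I_2}$ has dimension $|S_4| = 24$ and decomposes under the commutative Atkin--Lehner algebra $\scra_{2,p}$ into one-dimensional joint eigenspaces indexed by the Weyl group. For each ordering $(\phi_1, \phi_2, \phi_3, \phi_4)$ of the four Satake parameters above, there is a unique (up to scalar) Iwahori-fixed vector on which $\scra_{2,p}$ acts through the character $\chi$ given by $\chi(u_{p,i}) = p^{1-i}\phi_i$, following the recipe recalled in \S\ref{eigenconstruct}. I take $v^\pm$ to be the eigenvector corresponding to the ordering $(\alpha_\gp\alpha_{\gp^c},\, \alpha_\gp\beta_{\gp^c},\, \beta_\gp\alpha_{\gp^c},\, \beta_\gp\beta_{\gp^c})$, which is the ordering naturally compatible with the fixed refinements $\alpha_\gp, \alpha_{\gp^c}$ of $\pi_\gp, \pi_{\gp^c}$. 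A direct substitution then shows that the resulting values of $\chi(u_{p,i})$ are precisely the four entries of the tuple appearing in the lemma.

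For the ``In particular'' clause, the factorization $U_p = U_{p,1} U_{p,2} U_{p,3} = u_{p,1}^3 u_{p,2}^2 u_{p,3}$ in $\scra_{2,p}$ gives
$$
\chi(U_p) = \chi(u_{p,1})^3 \chi(u_{p,2})^2 \chi(u_{p,3}).
$$
Substituting the tuple entries and collecting terms yields $p^{-4}\alpha_\gp^5 \beta_\gp\, \alpha_{\gp^c}^4 \beta_{\gp^c}^2$. Using the central-character identity $\alpha_\gp\beta_\gp = \alpha_{\gp^c}\beta_{\gp^c} = p^{m+1}$ to rewrite this as $p^{-4} \alpha_\gp^4 (\alpha_\gp\beta_\gp) \alpha_{\gp^c}^2 (\alpha_{\gp^c}\beta_{\gp^c})^2$ and simplifying produces the advertised scalar $p^{3m-1}\alpha_\gp^4 \alpha_{\gp^c}^2$. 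The identity $\alpha_\gp\beta_\gp = p^{m+1}$ itself follows from the weight condition $n + 2v = mt$: the motive $M$ attached to $f$ has pure weight $m+1$ (its Hodge types at $\sigma_i$ are $(n_i+1+v_i,v_i)$ and $(v_i,n_i+1+v_i)$), so the determinant of the associated Galois representation is the $(m+1)$-st power of the cyclotomic character (up to a finite-order twist).

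The main obstacle is the careful bookkeeping of normalizations: aligning the formula $\chi(u_{p,i}) = p^{1-i}\phi_i$ with the exact powers of $p$ in the tuple as written, verifying $\alpha_\gp\beta_\gp = p^{m+1}$ from the weight datum $(n,v)$, and keeping track of the $\delta_{B_2}^{1/2}$ factor implicit in normalized parabolic induction. Once these conventions are fixed, the proof is purely mechanical.
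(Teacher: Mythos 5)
Your proposal is correct and follows essentially the same route as the paper: the paper likewise takes $v^\pm$ to be the Iwahori-fixed eigenvector attached to the refinement (ordering) $(\alpha_{\gp}\alpha_{\gp^c}, \alpha_{\gp}\beta_{\gp^c}, \beta_{\gp}\alpha_{\gp^c}, \beta_{\gp}\beta_{\gp^c})$ via the recipe $\chi(u_{p,i})=p^{1-i}\phi_i$, citing Hansen's Lemma 5.5.2 where you cite Casselman--Borel. The final computation is identical: $U_p = u_{p,1}^3 u_{p,2}^2 u_{p,3}$ together with $\alpha_{\gp}\beta_{\gp}=\alpha_{\gp^c}\beta_{\gp^c}=p^{m+1}$ (which the paper extracts from the characteristic polynomials of $U_{\gp}$, $U_{\gp^c}$ rather than from the motive) gives $p^{3m-1}\alpha_{\gp}^{4}\alpha_{\gp^c}^{2}$.
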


\begin{proof}
The proof of this lemma is similar to \cite[Lemma 5.5.2]{Hans}. We give a brief sketch here.
For this particular refinement, we see that $u_{p,1}=U_{p,1}$ acts by $\alpha_{\gp}\alpha_{\gp^c}$, $u_{p,2}=U_{p,2}U_{p,1}^{-1}$ acts by $p^{-1}\alpha_{\gp}\beta_{\gp^c}$, thus $U_{p,2}$ acts by $p^{-1}\alpha_{\gp}^{2}\alpha_{\gp^c}\beta_{\gp^c}$. Similarly $u_{p,3}=U_{p,3}U_{p,2}^{-1}$ acts by $p^{-2}\beta_{\gp}\alpha_{\gp^c}$ and hence $U_{p,3}$ acts by $p^{-3}\alpha_{\gp}^{2}\alpha_{\gp^c}^{2}\beta_{\gp}\beta_{\gp^c}$. Finally $u_{p,4}=U_{p,4}U_{p,3}^{-1}$ acts by $p^{-3}\beta_{\gp}\beta_{\gp^c}$ and $U_{p,4}$ acts by $p^{-6}\alpha_{\gp}^{2}\alpha_{\gp^c}^{2}\beta_{\gp}^{2}\beta_{\gp^c}^{2}$. 

From the characteristic polynomials of $U_{\mathfrak{p}}$ and $U_{\mathfrak{p}^c}$, we see that $\alpha_{\gp}\beta_{\gp}=\alpha_{\gp}\beta_{\gp^c}=p^{m+1}$. For the computation of the $U_{p}$ operator, notice that $U_{p}=U_{p,1}U_{p,2}U_{p,3}$ and hence acts via
\begin{align*}
(p^{-2} \beta_{\gp}\alpha_{\gp^c})(p^{-1}\alpha_{\gp}\beta_{\gp^c})^{2}(\alpha_{\gp}\alpha_{\gp^c})^{3} = p^{-4} (\alpha_{\gp}\beta_{\gp})(\alpha_{\gp^c}\beta_{\gp^c})^{2} \alpha_{\gp}^{4} \alpha_{\gp^c}^{2} 
= p^{3m-1} \alpha_{\gp}^{4}\alpha_{\gp^c}^{2}.
\end{align*}
This completes the proof of the lemma.
\end{proof}

The $\star$-action of $\scra_{2, p}$ on $\As^{\pm}(\pi)_p^{I_2}$ is the usual action, rescaled by $j(\kappa)(1,p,p^{2},p^{3})^{-1}$, where $j(\kappa)$ corresponds to the highest weight vector 
$$
\mu = \left(\frac{n_{1}+n_{2}}{2}+m-1, \frac{n_{1}-n_{2}}{2}+m-1, m-\frac{n_{1}-n_{2}}{2}, m-\frac{n_{1}+n_{2}}{2}\right).
$$
By our assumption that $n_1 > n_2$, the weight $\mu$ is a dominant integral weight. We compute that
\begin{align*}
\mu(1,p,p^2,p^3) &= p^{ \frac{n_{1}-n_{2}}{2}+m-1+2( m-\frac{n_{1}-n_{2}}{2})+3(m-\frac{n_{1}+n_{2}}{2})}\\ 
& = p^{6m-1-2n_{1}-n_{2}}\\
&= p^{3m-1+4v_{1}+2v_{2}}.
\end{align*}

Define the set of classical weights as 
$$
\scrw_1^{cl}= \{ (n,v) \in \scrw_1 \mid n_i, v_i \in \Z,\ n_{1} > n_{2} \ge 0 \textrm{ and } 2v_{1}+v_{2}=0 \}.$$
It is clear that $\scrw_1^{cl}$ is Zariski accumulation dense in $\scrw_{1}$.
Moreover, if $f$ has weight $(n,v) \in \scrw_1^{cl}$, then the eigenvalue of $\star$-action of $U_{p}$ on $\As^{\pm}(\pi)_p^{I_2}$ is $\alpha_{\gp}^{4}\alpha_{\gp^c}^{2}$.

Now we compute small slope $h$ for the weight $j(\kappa)=\mu$. By definition, we have
$$
h= \inf_{w \in S_{4}\setminus \{1\}} v_{p}((w \cdot \mu)(1,p,p^{2},p^{3})) - v_{p}(\mu(1,p,p^{2},p^{3})),
$$
where $ w \cdot \mu = (\mu+\rho)^{w} - \rho$.
We note that $\rho = (3,2,1,0)$ and 
\begin{multline*}
v_{p}((w \cdot \mu)(1,p,p^{2},p^{3})) - v_{p}(\mu(1,p,p^{2},p^{3})) 
 = (\mu_{w^{-1}(2)}- \mu_{2} + \rho_{w^{-1}(2)} - \rho_{2}) \\ + 2(\mu_{w^{-1}(3)}- \mu_{3} + \rho_{w^{-1}(3)} - \rho_{3})+3(\mu_{w^{-1}(4)}- \mu_{4} + \rho_{w^{-1}(4)} - \rho_{4}).
\end{multline*}
Hence $ v_{p}((w \cdot \mu)(1,p,p^{2},p^{3})) - v_{p}(\mu(1,p,p^{2},p^{3}))$ is a non-negative integer linear combination of 
\begin{gather*}
\mu_{1}-\mu_{2}+\rho_{1}-\rho_{2} = n_{2}+1, \\
\mu_{1}-\mu_{3}+\rho_{1}-\rho_{3} = n_{1}+1, \\
\mu_{1}-\mu_{4}+\rho_{1}-\rho_{4} =n_{1}+n_{2}+2, \\
\mu_{2}-\mu_{3}+\rho_{2}-\rho_{3} = n_{1}-n_{2}, \\
\mu_{2}-\mu_{4}+\rho_{2}-\rho_{4} = n_{1}+1, \\ 
\mu_{3}-\mu_{4}+\rho_{3}-\rho_{4} = n_{2}+ 1.
\end{gather*}
By taking $w = (1\ 2)$ and $w = (2\ 3)$, we see that 
$$
h=  \min \{ n_{1}-n_{2}, n_{2}+1\}.
$$

\begin{proposition}\label{prop1}
If $\alpha_{\gp}$ and $\alpha_{\gp^c}$ satisfy
$$
v_{p}(\alpha_{\gp}^{4}\alpha_{\gp^c}^{2})<  \min \{ n_{1}-n_{2}, n_{2}+1\},
$$
then $H^{\ast}(K_{2}^{p}I_{2},\mathcal{D}_{j(\kappa)})$ contains a nonzero vector $v^\pm$ such that every $T \in \T_2$ acts on $v^\pm$ through the scalar $\phi_{\pi,\{ \alpha_{\wp}, \alpha_{\wp^c} \}} (\sigma^{\pm}(T))$.
\end{proposition}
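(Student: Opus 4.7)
The plan is to apply Hansen's control theorem (Theorem~\ref{controlthm}) to reduce the existence of $v^\pm$ in overconvergent cohomology to the existence of an analogous class in classical cohomology, and then to produce the latter from the automorphic representation $\As^\pm(\pi)$, whose automorphy is supplied by \cite{Kris, Ram}.

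First I would observe that the quantity $\min\{n_1-n_2,n_2+1\}$ appearing in the hypothesis is precisely the small-slope bound for the controlling operator $U_p$ at the dominant arithmetic weight $j(\kappa)_1=\mu$: this was computed in the paragraphs immediately preceding the proposition. The lemma just above further identifies the eigenvalue of the $\star$-action of $U_p$ on the Iwahori-fixed vector associated to the refinement $(\alpha_\gp,\alpha_{\gp^c})$ as $\alpha_\gp^4\alpha_{\gp^c}^2$, after rescaling the standard eigenvalue $p^{3m-1}\alpha_\gp^4\alpha_{\gp^c}^2$ by $j(\kappa)(1,p,p^2,p^3)^{-1}=p^{-(3m-1)}$ (here I use that $4v_1+2v_2=2(2v_1+v_2)=0$ on the classical locus $\scrw_1^{cl}$). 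The assumption $v_p(\alpha_\gp^4\alpha_{\gp^c}^2)<\min\{n_1-n_2,n_2+1\}$ is thus exactly the small-slope condition at weight $j(\kappa)$.

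Next I would construct the classical cohomology class. Since $p$ is unramified in $K$ and $\pi_p$ is unramified, $\As^\pm(\pi)$ is unramified at $p$; and by the infinitesimal character calculation in \S\ref{sec:asai-transfer}, the normalized Asai transfer is cohomological of weight $\mu=j(\kappa)_1$. It therefore contributes a nonzero class to $H^\ast(K_2^pI_2^{1,s},\mathscr{L}_{j(\kappa)_1})$. The preceding lemma produces a specific $v^\pm\in\As^\pm(\pi)_p^{I_2}$ on which $\scra_{2,p}$ acts via the character corresponding to the ordered tuple $(p^{-3}\beta_\gp\beta_{\gp^c},p^{-2}\beta_\gp\alpha_{\gp^c},p^{-1}\alpha_\gp\beta_{\gp^c},\alpha_\gp\alpha_{\gp^c})$ determined by the refinement $(\alpha_\gp,\alpha_{\gp^c})$ of $\pi_p$. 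The explicit Satake computations justifying the definition of $\sigma^\pm$ (carried out above in both the split and inert cases for $\ell\neq p$) show that each unramified operator $T_{\ell,i}$ acts on this class by the scalar $\phi_{\pi,\{\alpha_\gp,\alpha_{\gp^c}\}}(\sigma^\pm(T_{\ell,i}))$; and since $\mu$ is an algebraic weight, the relevant finite-order character $\epsilon$ is trivial, so the class sits in the appropriate $T_2(\Z/p^s\Z)=\epsilon$ isotypic component and in the slope-$\leq h$ part.

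Finally I would invoke Theorem~\ref{controlthm}: the small-slope hypothesis yields a $\T_2$-equivariant isomorphism
\[
H^\ast(K_2^pI_2,\mathcal D_{j(\kappa)})_{\leq h}\ \xrightarrow{\ \sim\ }\ H^\ast(K_2^pI_2^{1,s},\mathscr L_{j(\kappa)_1})_{\leq h}^{T_2(\Z/p^s\Z)=\epsilon},
\]
and any preimage of the classical class produced above gives the sought overconvergent vector, on which every $T\in\T_2$ acts through $\phi_{\pi,\{\alpha_\gp,\alpha_{\gp^c}\}}\circ\sigma^\pm$. The main technical obstacle is the bookkeeping around the $\star$-action versus the standard action and the rescaling by $j(\kappa)(1,p,p^2,p^3)^{-1}$: one must verify carefully that, after this rescaling, the character of $\scra_{2,p}$ extracted from the lemma is \emph{exactly} the refinement of $\As^\pm(\pi)_p$ dictated by the chosen refinement of $\pi_p$, so that no hidden shift appears between the Hecke eigensystems on the two sides of the control-theorem isomorphism.
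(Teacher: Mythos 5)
Your proposal is correct and follows essentially the same route as the paper's proof: identify the hypothesis as the small-slope bound for $U_p$ at the dominant weight $\mu=j(\kappa)$, use the cohomologicality of $\As^\pm(\pi)$ together with the Satake computations defining $\sigma^\pm$ and the refinement lemma to produce the classical Hecke eigenclass (with the $\star$-action eigenvalue $\alpha_{\gp}^4\alpha_{\gp^c}^2$ after the $j(\kappa)(1,p,p^2,p^3)^{-1}$ rescaling), and then transfer it to $H^{\ast}(K_2^pI_2,\cald_{j(\kappa)})$ via Theorem~\ref{controlthm}. Your explicit attention to the $\star$-versus-standard action bookkeeping is exactly the point the paper handles through the computation of $\mu(1,p,p^2,p^3)$ and the choice of $\scrw_1^{cl}$, so no new idea is needed.
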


\begin{proof}
Since $n_{1}>n_{2}$, we have $\mu= j(\kappa)$ is dominant and $\As^{\pm}(\pi)$ is cohomological of weight $j(\kappa)$, see \cite{Ram}. By construction of the map $\sigma^{\pm}$ we see that $T_{\ell,i}$ acts by $\phi_{f,\{ \alpha_{\wp}, \alpha_{\wp^c} \}}(\sigma^{\pm}(T_{\ell,i}))$ on the line $\As^{\pm}(\pi)_{\ell}^{\GL_{4}(\Z_{\ell})}$. From the choice of our refinement and the $\star$-action of $U_{p}$ operator (for $G_{2}$), we see that $\As^{\pm}(\pi)_{p}^{I_2}$ contains a vector on which $U_{p}$ acts by $\alpha_{\gp}^{4}\alpha_{\gp^c}^{2}$. Finally we note that for $U_{p}$, any $h< h_{0}$ is a small slope for the dominant weight $\mu$. Applying Theorem \ref{controlthm} we obtain an isomorphism
$$
H^{\ast}(K_{2}^{p}I_2, \cald_{j(\kappa)})_{< h} \cong H^{\ast}(K_2^p I_2, \mathscr{L}_{j(\kappa)})_{< h},
$$
and the target contains a vector satisfying the claim of the theorem.
\end{proof}

The points on the spectral variety $\scrz_{1}(\qpbar)$ consists of set of tuples $(n_{1},n_{2},v_{1},v_{2}, \alpha^{-1})$ such that there exists a cuspidal overconvergent Hilbert eigenform $f$ of weight $(n_{1},n_{2},v_{1},v_{2})$ and $U_{\mathfrak{p}}^{4}U_{\mathfrak{p}^c}^{2} - \alpha$ annihilates $f$.  Let $\scrz_1^{cl}$ be the set of points in $\scrz_{1}$ of the form $(n_{1},n_{2},v_{1},v_{2}, \alpha^{-1})$ such that, $(n_{1},n_{2},v_{1},v_{2}) \in \scrw_1^{cl}$ and $\alpha < \min \{ n_{1}-n_{2}, n_{2}+1\}$. Then $\scrz_1^{cl}$ is a Zariski accumulation dense subset of $\scrz_{1}$. If $z=(\kappa, \alpha^{-1})$ and if $\underline{k}=(n_{1}+2,n_{2}+2)$ and $\underline{w}= (n_{1}+v_{1}+1, n_{2}+v_{2}+1)$, then $\mathscr{M}_{1}(z) \cong S_{\underline{k},\underline{w}}(K_{1}^{p}I_1)_{< \alpha}$. On the other hand, $\mathscr{M}_{2}(j(z)) \cong H^{\ast}(K_{2}^{p}I_2, \cald_{j(\kappa)})_{< \alpha} \cong H^{\ast}(K_{2}^{p}I_2, \mathscr{L}_{j(\kappa)})_{< \alpha}$. By classical Asai transfer we now have $\T_2$-equivariant inclusion
$\mathscr{M}_{1}(z) \hookrightarrow \mathscr{M}_{2}(j(z))$ for every $z \in \scrz_1^{cl}$. Applying the comparison theorem, Theorem \ref{compthm}, we obtain our desired map.

\begin{theorem}[$p$-adic Asai transfer: split case]
There exists a rigid analytic map
$$
\phi^{\pm} : \scrx_1 \to \scrx_2
$$
which sends the point $\pi, \{ \alpha_{\wp}, \alpha_{\wp}^c \}$ to the point $\As^{\pm}(\pi), \chi$, where $\chi$ is the refinement given by 
$$
\chi(u_{p,1}) = \alpha_{\gp}\alpha_{\gp^c}, \chi(u_{p,2})= p^{-1}\alpha_{\gp}\beta_{\gp^c}, \chi(u_{p,3})=  p^{-2}\beta_{\gp}\alpha_{\gp^c} \text{ and } \chi(u_{p,4})= p^{-3}\beta_{\gp}\beta_{\gp^c}.
$$
\end{theorem}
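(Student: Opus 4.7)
The plan is to apply the comparison theorem (Theorem~\ref{compthm}) to the data already assembled in this section: the weight map $j:\scrw_1 \hookrightarrow \scrw_2$, the Hecke algebra homomorphism $\sigma^\pm:\T_2 \to \T_1$, and the Zariski accumulation dense set $\scrz_1^{cl} \subset \scrz_1$ of classical small-slope points. First I would verify the elementary input conditions: the map $j$ is a closed immersion (immediate from its monomial description in the torus coordinates), $\sigma^\pm$ is a well-defined $\Q_p$-algebra homomorphism by its explicit values on the generators of $\T_2^{\mathrm{unr}}$ and $\scra_{2,p}^+$, and $\scrz_1^{cl}$ is Zariski accumulation dense in $\scrz_1$ because $\scrw_1^{cl}$ is such in $\scrw_1$ and the slope bound $v_p(\alpha_\gp^4 \alpha_{\gp^c}^2) < \min\{n_1 - n_2, n_2 + 1\}$ is preserved when accumulating classical weights. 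I would also note that $\scrx_1$ is the Hilbert modular eigenvariety of $\GL_2/K$, which is unmixed, hence $\scrx_1^\circ = \scrx_1$, so the map produced by Theorem~\ref{compthm} will be defined on all of $\scrx_1$.

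The heart of the argument is the verification, for each $z = (\kappa, \alpha^{-1}) \in \scrz_1^{cl}$, of a $\T_2$-equivariant inclusion $\scrm_1(z)^{ss} \hookrightarrow \scrm_2(j(z))^{ss}$, where $\T_2$ acts on the source through $\sigma^\pm$. On the source side, Theorem~\ref{controlthm} identifies $\scrm_1(z)$ with the small-slope part $S_{\underline{k},\underline{w}}(K_1^p I_1)_{<\alpha}$. On the target side, Proposition~\ref{prop1} combined with Theorem~\ref{controlthm} identifies $\scrm_2(j(z))$ with the corresponding small-slope classical cohomology of $\GL_4/\Q$ in weight $\mu = j(\kappa)$. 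For each Hilbert eigenform $f$ contributing to the source with refinement $\{\alpha_\gp, \alpha_{\gp^c}\}$, classical Asai transfer (as recalled in Section~\ref{sec:asai-transfer}) produces a cohomological automorphic representation $\As^\pm(\pi)$ of weight $j(\kappa)$, together with the vector $v^\pm$ furnished by the lemma preceding Proposition~\ref{prop1}. The Hecke eigenvalue matchings at unramified $\ell \neq p$ (both split and inert cases) and the Atkin--Lehner eigenvalue matchings at $p$ are precisely the computations that motivated the definition of $\sigma^\pm$, so the assignment $f \mapsto \As^\pm(\pi)$ is $\T_2$-equivariant through $\sigma^\pm$ and yields the required inclusion.

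With these ingredients in place, Theorem~\ref{compthm} produces a rigid analytic map $\phi^\pm: \scrx_1 \to \scrx_2$. To identify the image of a classical point $x = (\pi, \{\alpha_\gp, \alpha_{\gp^c}\})$, the second commutative diagram in Theorem~\ref{compthm} gives the identity $\phi_{\phi^\pm(x)} = \phi_x \circ \sigma^\pm$ on $\T_2$. The unramified Hecke eigenvalues of $\phi^\pm(x)$ at primes $\ell \neq p$ therefore agree with those of $\As^\pm(\pi)$ by the calculations of Section~\ref{sec:asai-transfer}. The refinement character $\chi$ of $\scra_{2,p}$ is read off from the action computed in the lemma before Proposition~\ref{prop1}, which yields precisely the stated values $\chi(u_{p,i})$ for $i = 1,2,3,4$.

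The main obstacle I anticipate is the verification, at every classical point, that the Hecke-equivariant assignment assembles into an injective map on full semisimplified modules rather than just a pointwise compatibility of eigenvalues; this requires checking that no two distinct Hilbert eigenforms in $S_{\underline{k},\underline{w}}(K_1^p I_1)_{<\alpha}$ can collapse under classical Asai transfer into a single $\GL_4/\Q$ constituent on the small-slope side, which in turn rests on the strong multiplicity one input underlying the automorphy of the Asai transfer in \cite{Kris} and \cite{Ram}.
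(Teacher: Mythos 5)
Your proposal follows essentially the same route as the paper: assemble the closed immersion $j$, the Hecke map $\sigma^{\pm}$, and the Zariski accumulation dense set $\scrz_1^{cl}$, use the control theorem together with Proposition~\ref{prop1} and the classical Asai transfer to get the $\T_2$-equivariant inclusion $\scrm_1(z)\hookrightarrow\scrm_2(j(z))$ at classical points, and then invoke Theorem~\ref{compthm}, reading off the refinement $\chi$ from the lemma on $\As^{\pm}(\pi)_p^{I_2}$. The multiplicity issue you flag at the end is real but is not addressed in the paper either, which simply asserts the inclusion from classical Asai transfer; otherwise your argument is the paper's proof.
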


\smallskip
\subsection{The case where $p$ is inert in $K$}
Assume now that $p$ is inert in $K$. In this section, using the comparison theorem Theorem~\ref{compthm}, we construct a rigid analytic between $\scrx_1$ and $\scrx$, which at classical points interpolate `Asai transfer'.

The map at the level of the weight spaces remains the same. We define a map $\tilde \sigma^\pm : \tilde \T_2 \to \T_1$ as follows. On the unramified part of the Hecke algebra, the maps $\tilde \sigma^\pm$ agrees with $\sigma^\pm$. At $p$, we define $\tilde \sigma^\pm$ on $\tilde \scra^+_{2,p}$ by sending
\begin{align*}
\tilde{U}_{p, 1} & \mapsto \pm U_p \\
\tilde{U}_{p, 2} & \mapsto U^2_p S_{p} \\
\tilde{U}_{p, 3} & \mapsto \mp p^{-1} U_{p} S_{p}  \\
\tilde{U}_{p, 4} & \mapsto -p^{-2} S^2_{p}.
\end{align*}

The following lemma will justify the definition of $\tilde \sigma^\pm$ for Hecke operators supported at $p$. For a Hilbert modular form $f$ we choose the refinement such that, the $U_{p}$ eigenvalue of $f$ is $\alpha_{p}$. Note that, $N_{K/\Q}(p)S_{p}$ acts via the eigenvalue $\alpha_{p}\beta_{p}$.

\begin{lemma}\label{lem1}
The module $\As^{\pm}(\pi)_{p}^{I_2}$ contains a vector $v$ on which $\scra_{2, p}$ acts via the character $\chi$ associated to the tuple $(\pm p^{-3}\beta_{p}, \pm p^{-2} \sqrt{\alpha_{p}\beta_{p}} , \mp p^{-1}\sqrt{\alpha_{p}\beta_{p}}, \pm \alpha_{p})$. As a consequence, the character $\tilde{\chi}= \chi|_{\tilde{\scra}_{2,p}}$ of $\tilde{\scra}_{2,p}$ is a refinement associated to $\As^{\pm}(\pi)_{p}^{I_2}$ in $\scrx$. In particular, $\tilde{U}_{p}$ acts via the scalar $-p^{4m-1}\alpha_{p}^{4}$.
\end{lemma}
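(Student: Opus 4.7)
The plan is to mirror the proof of the preceding split-case lemma, with the Satake parameters of the local Asai transfer replaced by their inert-prime counterparts. Two inputs are needed: first, from \S\ref{sec:asai-transfer}, that $\As^{\pm}(\pi)_p$ is an unramified principal series whose Satake parameters form the multiset $\{\pm\alpha_p,\,\pm\beta_p,\,\pm\sqrt{\alpha_p\beta_p},\,\mp\sqrt{\alpha_p\beta_p}\}$; and second, from \S\ref{sec:universal-eigenvarieties}, that any ordering $(\phi_1,\phi_2,\phi_3,\phi_4)$ of these parameters produces a refinement of $\As^{\pm}(\pi)_p$, i.e.\ a nonzero vector in $\As^{\pm}(\pi)_p^{I_2}$ on which $\scra_{2,p}$ acts through the character $\chi$ defined by $\chi(u_{p,i})=p^{1-i}\phi_i$.

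First I would specify the ordering
\begin{equation*}
\phi_1=\pm\alpha_p,\qquad \phi_2=\mp\sqrt{\alpha_p\beta_p},\qquad \phi_3=\pm\sqrt{\alpha_p\beta_p},\qquad \phi_4=\pm\beta_p,
\end{equation*}
and substitute into $\chi(u_{p,i})=p^{1-i}\phi_i$ to read off precisely the tuple in the lemma statement. This produces the vector $v$ and the claimed character. The assertion that $\tilde\chi:=\chi|_{\tilde\scra_{2,p}}$ is an accessible refinement in $\scrx$ is then immediate from the definition given in \S\ref{sec:relevant-eigenvarieties}, since $\chi$ by construction is a refinement of $\As^{\pm}(\pi)_p$ appearing in $\scrx_2$.

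For the $\tilde U_p$-eigenvalue, I would use the group-algebra identification $\scra_{2,p}\cong \Q_p[T_2(\Q_p)/T_2(\Z_p)]$ to identify $\tilde U_{p,1}=U_{p,1}$, $\tilde U_{p,2}=U_{p,2}^{2}$, $\tilde U_{p,3}=U_{p,3}$ inside $\scra_{2,p}$, so that $\tilde U_p=U_{p,1}\cdot U_{p,2}^{2}\cdot U_{p,3}$. Writing $U_{p,i}=u_{p,1}u_{p,2}\cdots u_{p,i}$ gives
\begin{equation*}
\chi(\tilde U_p)\;=\;\phi_1\cdot (p^{-1}\phi_1\phi_2)^2\cdot (p^{-3}\phi_1\phi_2\phi_3)\;=\;p^{-5}\phi_1^{4}\phi_2^{3}\phi_3,
\end{equation*}
and the chosen ordering yields $\phi_1^{4}=\alpha_p^{4}$ together with $\phi_2^{3}\phi_3=-(\alpha_p\beta_p)^{2}$, so $\chi(\tilde U_p)=-p^{-5}\alpha_p^{4}(\alpha_p\beta_p)^{2}$. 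Finally, the inert analogue $\alpha_p\beta_p=p^{2(m+1)}$ of the normalization $\alpha_\gp\beta_\gp=p^{m+1}=N(\gp)^{m+1}$ used in the split-case proof (valid at an inert prime since $N_{K/\Q}(p)=p^{2}$) simplifies this to $-p^{4m-1}\alpha_p^{4}$.

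The only real subtlety is the square-root ambiguity in $\sqrt{\alpha_p\beta_p}$. What makes the computation go through is that the ordering places $\phi_2$ and $\phi_3$ with opposite signs, so the combinations that actually appear in the final formulas — namely $\phi_2^{2}$ and $\phi_2\phi_3$ — are canonical polynomials in $\alpha_p,\beta_p$ independent of the choice of square root. This is precisely what the parity constraint $a_3-a_2\in 2\mathbb{N}\cup\{0\}$ in the definition of $\tilde T_2^{+}$ is designed to enforce, and explains why the auxiliary eigenvariety $\scrx$, rather than $\scrx_2$ itself, is the natural target of the inert-case $p$-adic Asai transfer.
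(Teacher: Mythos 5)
Your proposal is correct and follows essentially the same route as the paper: take the Satake parameters $\pm\alpha_p,\pm\beta_p,\pm\sqrt{\alpha_p\beta_p},\mp\sqrt{\alpha_p\beta_p}$ of the local Asai transfer at the inert prime, choose the ordering giving the stated refinement $\chi(u_{p,i})=p^{1-i}\phi_i$, and compute the $\tilde U_{p,i}$- and $\tilde U_p$-eigenvalues in the commutative Atkin--Lehner algebra using $\alpha_p\beta_p=p^{2m+2}$, exactly as in the paper (which organizes the same computation via $\tilde\chi(\tilde u_{p,i})$ rather than via $\tilde U_p=U_{p,1}U_{p,2}^2U_{p,3}$, with identical outcome $-p^{4m-1}\alpha_p^4$). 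Your closing remark on the sign/square-root cancellation being the reason for the parity condition in $\tilde T_2^{+}$ is a correct and apt observation, consistent with the paper's construction of the auxiliary eigenvariety $\scrx$.
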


\begin{proof}
The proof of this lemma is similar to Lemma 4.4.1. We give a brief sketch here.

First, from the character $\chi$ of $\scra_p$, we get an explicit description of the character $\tilde{\chi}$ of $\tilde{\scra}_{2, p}$. We easily compute that
\begin{align*}
\tilde{\chi}(\tilde{u}_{p,1}) & =  \chi(u_{p,1})= \pm \alpha_{p} \\
\tilde{\chi}(\tilde{u}_{p,2}) & =  \chi(u_{p,2})^{2}= p^{-2} \alpha_{p}\beta_{p} \\
\tilde{\chi}(\tilde{u}_{p,3}) & =  \chi(u_{p,3}) \chi(u_{p,2})^{-1}= -p^{-1} \\
\tilde{\chi}(\tilde{u}_{p,4}) & =  \chi(u_{p,4})= \pm p^{-3} \beta_{p}.
\end{align*}
Observe that, $\tilde{U}_{p,1} = \tilde{u}_{p,1}$, $\tilde{U}_{p,2} = \tilde{u}_{p,2} \tilde{U}_{p,1}^{2}, \tilde{U}_{p,3}= \tilde{u}_{p,3} \tilde{U}_{p,2} \tilde{U}_{p,1}^{-1}$ and $\tilde{U}_{p,4} = \tilde{u}_{p,4} \tilde{U}_{p,3}$; hence, we get that $\tilde{U}_{p,1}, \tilde{U}_{p,2}, \tilde{U}_{p,3}$ and $\tilde{U}_{p,4}$ acts by $\pm \alpha_{p}, p^{-2}\alpha_{p}^{3}\beta_{p}, \mp p^{-3} \alpha_{p}^{2}\beta_{p}$ and $-p^{-6} \alpha_{p}^{2}\beta_{p}^{2}$ respectively.

From the characteristic polynomial of $U_{p}$, we see that $\alpha_{p}\beta_{p}=p^{2m+2}$. By definition, our controlling operator $\tilde{U}_{p}=\tilde{U}_{p,1}\tilde{U}_{p,2}\tilde{U}_{p,3}$. We compute the action of $\tilde{U}_{p}$ as
\begin{align*}
(\pm \alpha_{p})(p^{-2}\alpha_{p}^{3}\beta_{p})(\mp p^{-3} \alpha_{p}^{2}\beta_{p}) = -p^{-5} (\alpha_{p}\beta_{p})^{2} \alpha_{p}^{4}
= - p^{4m-1} \alpha_{p}^{4}.
\end{align*}
This completes the proof of the lemma.
\end{proof}

The $\star$-action of $\tilde{\scra}_{2,p}^{+}$ on $\As^{\pm}(\pi)_p^{I_2}$ is the usual action, rescaled by $j(\kappa)(1,p,p^{3},p^{4})^{-1}$, where $j(\kappa)$ corresponds to the highest weight vector 
$$
\mu = \left( \frac{n_{1}+n_{2}}{2}+m-1, \frac{n_{1}-n_{2}}{2}+m-1, m-\frac{n_{1}-n_{2}}{2}, m-\frac{n_{1}+n_{2}}{2} \right).
$$
By our assumption that $n_1 > n_2$, the weight $\mu$ is a dominant integral weight. We compute that
\begin{align*}
\mu(1,p,p^2,p^3) &= p^{ \frac{n_{1}-n_{2}}{2}+m-1+3 ( m-\frac{n_{1}-n_{2}}{2})+ 4(m-\frac{n_{1}+n_{2}}{2})}\\ 
& = p^{8m-1-3n_{1}-n_{2}}\\
&= p^{4m-1+6v_{1}+2v_{2}}.
\end{align*}

Define the set of classical weights as 
$$
\scrw_1^{cl}= \{ (n,v) \in \scrw_1 \mid n_i, v_i \in \Z,\ n_{1} > n_{2} \ge 0 \textrm{ and } 3v_{1}+v_{2}=0 \}.$$
It is clear that $\scrw_1^{cl}$ is Zariski accumulation dense in $\scrw_{1}$.
Moreover, if $f$ has weight $(n,v) \in \scrw_1^{cl}$, then the eigenvalue of $\star$-action of $U_{p}$ on $\As^{\pm}(\pi)_p^{I_2}$ is $-\alpha_{p}^{4}$.

Now we compute small slope $h$ for the weight $j(\kappa)=\mu$. By definition, we have
$$
h= \inf_{w \in S_{4}\setminus \{1\}} v_{p}((w \cdot \mu)(1,p,p^{2},p^{3})) - v_{p}(\mu(1,p,p^{2},p^{3})).
$$
As in the split case, we note that
\begin{multline*}
v_{p}((w \cdot \mu)(1,p,p^{2},p^{3})) - v_{p}(\mu(1,p,p^{2},p^{3})) 
 = (\mu_{w^{-1}(2)}- \mu_{2} + \rho_{w^{-1}(2)} - \rho_{2}) \\ + 3 (\mu_{w^{-1}(3)}- \mu_{3} + \rho_{w^{-1}(3)} - \rho_{3})+ 4(\mu_{w^{-1}(4)}- \mu_{4} + \rho_{w^{-1}(4)} - \rho_{4}).
\end{multline*}
By taking $w = (1\ 2)$ and $w = (2\ 3)$, we see that 
$$
h=  \min \{  n_{2}+1, 2(n_{1}-n_{2}) \}.
$$

\begin{proposition}
If
$$
v_{p}(\alpha_{p})<  \min \left\{ \frac{n_{2}+1}{4}, \frac{n_{1}-n_{2}}{2} \right\},
$$
then $H^{\ast}(K_{2}^{p} I_2,\mathcal{D}_{j(\kappa)})$ contains a nonzero vector $v^{\pm}$ such that every $T \in \tilde \T_2$ acts on $v^{\pm}$ through the scalar $\phi_{f,\alpha_p}(\tilde \sigma^{\pm}(T))$.
\end{proposition}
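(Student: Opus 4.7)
The strategy is a direct inert-case analogue of Proposition \ref{prop1}, now applied to the modified eigenvariety datum $\scrd$ and the Hecke algebra $\tilde{\T}_2$. Since $n_1 > n_2$, the weight $\mu = j(\kappa)$ is dominant integral, and by the main theorem of Ramakrishnan \cite{Ram}, $\As^{\pm}(\pi)$ is cohomological of weight $\mu$. Therefore the classical cohomology $H^{\ast}(K_2^p I_2, \mathscr{L}_{j(\kappa)})$ contains the finite part of $\As^{\pm}(\pi)$ as a Hecke-submodule.

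First I would check that the unramified Hecke operators act correctly. Since $\tilde{\sigma}^{\pm}$ agrees with $\sigma^{\pm}$ on $\T_2^{\mathrm{unr}}$, the split and inert-$\ell$ computations carried out in the proof of Proposition \ref{prop1} apply verbatim to show that each $T_{\ell, i}$ acts on the spherical line of $\As^{\pm}(\pi)_\ell$ by $\phi_{f,\alpha_p}(\tilde{\sigma}^{\pm}(T_{\ell, i}))$. For the prime $p$, Lemma \ref{lem1} produces a nonzero vector $v \in \As^{\pm}(\pi)_p^{I_2}$ on which $\tilde{\scra}_{2,p}$ acts via the character $\tilde{\chi}$, and by the definition of $\tilde{\sigma}^{\pm}$ at $p$ this character coincides with the restriction of $\phi_{f,\alpha_p} \circ \tilde{\sigma}^{\pm}$ to $\tilde{\scra}_{2,p}$.

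Next I would translate the eigenvalue of the controlling operator into the $\star$-normalization. The $\star$-action on $\As^{\pm}(\pi)_p^{I_2}$ is the natural action rescaled by $j(\kappa)(1,p,p^3,p^4)^{-1} = p^{-(4m-1+6v_1+2v_2)}$, so combining this with Lemma \ref{lem1} the $\star$-eigenvalue of $\tilde{U}_p$ on $v$ equals
\[
-p^{4m-1}\alpha_p^4 \cdot p^{-(4m-1+6v_1+2v_2)} = -p^{-6v_1-2v_2}\alpha_p^4.
\]
For weights in the Zariski-dense set $\scrw_1^{cl}$, the normalization $3v_1+v_2 = 0$ forces $-6v_1-2v_2 = 0$, so this $\star$-eigenvalue is $-\alpha_p^4$, of $p$-adic valuation $4v_p(\alpha_p)$. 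The small-slope bound computed just above the proposition is $h_0 = \min\{n_2+1,\, 2(n_1-n_2)\}$, and the hypothesis $v_p(\alpha_p) < \min\{(n_2+1)/4,\, (n_1-n_2)/2\}$ is precisely the statement that $4v_p(\alpha_p) < h_0$.

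Finally, I would invoke the Ash--Stevens--Urban control theorem (Theorem \ref{controlthm}) for $G_2$ with controlling operator $\tilde{U}_p$ at the dominant weight $\mu$ and slope $h$ chosen so that $4v_p(\alpha_p) \le h < h_0$. This yields the Hecke-equivariant isomorphism
\[
H^{\ast}(K_2^p I_2, \cald_{j(\kappa)})_{\le h} \cong H^{\ast}(K_2^p I_2, \mathscr{L}_{j(\kappa)})_{\le h},
\]
and the vector produced on the classical side transports through this isomorphism to the desired overconvergent vector $v^{\pm}$. The only delicate point is the bookkeeping of the $\star$-rescaling for the inert-case controlling operator $\tilde{U}_p$ corresponding to $(1,p,p^3,p^4)$ rather than $(1,p,p^2,p^3)$, together with verifying that the modified normalization $3v_1+v_2 = 0$ cleanly eliminates the resulting powers of $p$; everything else is structurally identical to the split case.
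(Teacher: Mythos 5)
Your proposal is correct and follows essentially the same route as the paper, which proves this proposition by appealing to the control theorem (Theorem~\ref{controlthm}) exactly as in Proposition~\ref{prop1}: cohomologicality of $\As^{\pm}(\pi)$ in weight $j(\kappa)$, the unramified and $p$-adic Hecke eigenvalue computations (the latter via Lemma~\ref{lem1}), and the $\star$-rescaling by $j(\kappa)(1,p,p^{3},p^{4})^{-1}$ with $3v_{1}+v_{2}=0$ giving $\star$-eigenvalue $-\alpha_{p}^{4}$, whose valuation $4v_{p}(\alpha_{p})$ lies below the small-slope bound $\min\{n_{2}+1,\,2(n_{1}-n_{2})\}$. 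Your write-up merely makes explicit the bookkeeping that the paper leaves implicit, so no gap is present.
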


\begin{proof}
The proof follows from the control theorem, Theorem~\ref{controlthm}, exactly as in Proposition \ref{prop1}.
\end{proof}

Let $\scrz_{1}(\qpbar)$ consists of set of tuples $(n_{1},n_{2},v_{1},v_{2}, \alpha^{-1})$ such that there exists a cuspidal overconvergent Hilbert eigenform $f$ of weight $(n_{1},n_{2},v_{1},v_{2})$ and $U_{p}^{4} + \alpha$ annihilates $f$. Let $\scrz_1^{cl}$ be the set of points in $\scrz_{1}$ of the form $(n_{1},n_{2},v_{1},v_{2}, \alpha^{-1})$ such that, $(n_{1},n_{2},v_{1},v_{2}) \in \scrw_1^{cl}$ and $\alpha$ satisfies $v_{p}(\alpha) < \min \{ \frac{n_{1}-n_{2}}{2}, \frac{n_{2}+1}{4}\}$. Then $\scrz_1^{cl}$ is a Zariski accumulation dense subset of $\scrz_{1}$. By classical Asai transfer we now have $\tilde \T_2$-equivariant inclusion $\mathscr{M}_{1}(z) \hookrightarrow \mathscr{M}_{2}(j(z))$ for every $z \in \scrz_1^{cl}$. Applying the comparison theorem, Theorem \ref{compthm}, we obtain our desired map.

\begin{theorem}[$p$-adic Asai transfer: inert case]
There exists a rigid analytic map
$$
\phi^{\pm} : \scrx_1 \to \scrx
$$
which sends the point $(f, \alpha_p)$ to the point $(\As^{\pm}(\pi), \tilde{\chi})$ where $\tilde{\chi}$ is the refinement given in Lemma \ref{lem1}.

\begin{remark}
It remains an interesting question to determine if the map $\phi^\pm$ can be lifted to make the following diagram commute.
$$
\xymatrix@1{
 & \scrx^\circ_2  \ar[d]^{Q} \\
 \scrx_1 \ar[r]_{\phi^\pm} \ar@{-->}[ur]^{\exists \tilde \phi^\pm} & \scrx. 
 }
 $$
\end{remark}

\end{theorem}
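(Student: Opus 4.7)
The plan is to apply the comparison theorem (Theorem \ref{compthm}) to the eigenvariety data $\scrd_1 = (\scrw_1, \scrz_1, \scrm_1, \T_1, \psi_1)$ for $\scrx_1$ and $\scrd = (\scrw_2, \scrz_2, \scrm_2, \tilde\T_2, \psi_2|_{\tilde\T_2})$ for $\scrx$. The three ingredients required have essentially been assembled in the preceding paragraphs: the closed immersion $j : \scrw_1 \hookrightarrow \scrw_2$ on weight spaces, the Hecke algebra homomorphism $\tilde\sigma^\pm : \tilde\T_2 \to \T_1$, and the Zariski-accumulation dense subset $\scrz_1^{cl} \subset \scrz_1$. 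Since the Hilbert modular eigenvariety $\scrx_1$ is unmixed (as recorded in Section \ref{sec:universal-eigenvarieties}), $\scrx_1^\circ = \scrx_1$ and the theorem will yield a map defined on all of $\scrx_1$.

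The central task is then to establish, for each $z = (\kappa, \alpha^{-1}) \in \scrz_1^{cl}$, a $\tilde\T_2$-equivariant inclusion $\scrm_1(z)^{ss} \hookrightarrow \scrm_2(j(z))^{ss}$. The slope bound $v_p(\alpha) < \min\{(n_1 - n_2)/2,\ (n_2 + 1)/4\}$ built into $\scrz_1^{cl}$ is a small slope for $\kappa$ on the $\GL_2/K$ side and, by the computation performed just above the theorem, also a small slope for $j(\kappa)$ on the $\GL_4/\Q$ side relative to the controlling operator $\tilde U_p$. I would therefore invoke the control theorem (Theorem \ref{controlthm}) on both sides to classicalize: $\scrm_1(z)$ becomes a space of small-slope cuspidal Hilbert eigenforms of weight $(\underline k, \underline w)$ and Iwahori level, while $\scrm_2(j(z))$ becomes a small-slope piece of $H^*(K_2^p I_2, \mathscr L_{j(\kappa)})$. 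For such an $f$, the classical Asai transfer of \cite{Kris, Ram} furnishes the cuspidal representation $\As^\pm(\pi) \otimes |\det|^{1/2}$ of $\GL_4/\Q$, cohomological of weight $j(\kappa)$, which produces a nonzero class on the target side.

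It remains to verify that this inclusion is $\tilde\T_2$-equivariant via $\tilde\sigma^\pm$. At unramified primes $\ell \neq p$ the check reduces to the characteristic-polynomial identities worked out in Section \ref{sec:asai-transfer} for the split and inert behaviours of $\ell$ in $K$, which match the definition of $\tilde\sigma^\pm = \sigma^\pm$ on the unramified part of the Hecke algebra. At $p$ this is precisely the content of Lemma \ref{lem1}, combined with the $\star$-action rescaling by $j(\kappa)(1, p, p^3, p^4)^{-1}$: the constraint $3v_1 + v_2 = 0$ defining $\scrw_1^{cl}$ is chosen exactly so that the rescaling sends the $\tilde U_p$-eigenvalue $-p^{4m-1}\alpha_p^4$ to $-\alpha_p^4 = \alpha$, matching the spectral-variety coordinate. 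Thus $j(z) = (j(\kappa), \alpha^{-1})$ lies in $\scrz_2$, hypothesis (iii) of Theorem \ref{compthm} is verified, and tracing the construction shows that the resulting map sends $(f, \alpha_p)$ to $(\As^\pm(\pi), \tilde\chi)$.

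The step I expect to be the main obstacle is exactly this Hecke matching at $p$. The square roots $\pm\sqrt{\alpha_p \beta_p}$ appearing in the unramified Langlands parameter of $\As^\pm(\pi)_p$ are not themselves rigid-analytically interpolable on $\scrx_1$, since the choice of sign varies unpredictably from fibre to fibre; this is precisely why the target must be the auxiliary eigenvariety $\scrx$ rather than $\scrx_2$. The generators $\tilde u_{p,i}$ of $\tilde\scra_{2,p}$ are chosen so that only even powers of these square roots appear, a requirement that is encoded in the evenness constraint $a_3 - a_2 \in 2\mathbb{N} \cup \{0\}$ in the definition of $\tilde T_2^{+}$. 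Once this algebraic obstruction is bypassed by passing to $\tilde\T_2$, Lemma \ref{lem1} handles the actual matching and Theorem \ref{compthm} produces the rigid analytic map $\phi^\pm$ with the asserted behaviour on classical points.
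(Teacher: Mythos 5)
Your proposal matches the paper's argument: the same triple of inputs ($j$ on weight spaces, $\tilde\sigma^\pm:\tilde\T_2\to\T_1$, the Zariski accumulation dense set $\scrz_1^{cl}$ with the slope bound), the same use of Lemma \ref{lem1} plus the $\star$-action rescaling and the control theorem to produce the $\tilde\T_2$-equivariant inclusion $\scrm_1(z)\hookrightarrow\scrm_2(j(z))$ at classical points, and then Theorem \ref{compthm} to get $\phi^\pm$. This is essentially the paper's proof, so no further comparison is needed.
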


\end{document}